\pgfplotsset{compat=1.13}
\def\XXint#1#2#3{{\setbox0=\hbox{$#1{#2#3}{\int}$ }
		\vcenter{\hbox{$#2#3$ }}\kern-.6\wd0}}
\DeclareMathOperator*{\diam}{diam}
\DeclareMathOperator{\interior}{int}
\newcommand\restr[2]{{
  \left.\kern-\nulldelimiterspace 
  #1 
  \littletaller 
  \right|_{#2} 
  }}
\newcommand{\littletaller}{\mathchoice{\vphantom{\big|}}{}{}{}}
\long\def\symbolfootnote[#1]#2{\begingroup%
	\def\thefootnote{\fnsymbol{footnote}}\footnote[#1]{#2}\endgroup}
\newtheoremstyle{definition}
{3ex}
{3ex}
{\upshape}
{}
{\bfseries}
{.}
{.5em}
{\thmname{#1}\thmnumber{ #2}\thmnote{ (#3)}}
\newtheoremstyle{remark}
{}{}{}{}{\bfseries}{.}{.5em}{{\thmname{#1 }}{\thmnumber{#2}}{\thmnote{ (#3)}}}
\theoremstyle{remboldstyle}
\newcommand{\brac}[1]{\left({#1}\right)}
\newcommand{\bracl}[1]{\left[#1\right]}
\theoremstyle{definition}
\newtheorem{defi}{Definition}[section]
\newtheorem{tw}[defi]{Theorem}
\newtheorem{lem}[defi]{Lemma}
\newtheorem{cor}[defi]{Corollary}
\newtheorem{obs}[defi]{Observation}
\newtheorem{prop}[defi]{Proposition}
\newtheorem{rem}[defi]{Remark}
\newtheorem{ex}[defi]{Example}
\newtheorem*{tw*}{Theorem}
\newtheorem{twmainA}{Theorem}
\newtheorem{twmainB}{Theorem}
\def\={\hspace{-3mm}&=&\hspace{-3mm}}
\begin{document}
	
\date{}

	\title[\bf Chessboard and level sets of continuous functions]{\bf Chessboard and level sets of continuous functions}
	
	\author{Micha{\l} Dybowski and Przemys{\l}aw G\'{o}rka}
	
	\maketitle
		\begin{abstract}
    We provide the following result and its discrete equivalent: \textit{Let $f \colon I^n \to \mathbb{R}^{n-1}$ be a continuous function. Then, there exist a point $p \in \mathbb{R}^{n-1}$ and a compact subset $S \subset f^{-1}\bracl{\left\{p\right\}}$ which connects some opposite faces of the $n$-dimensional unit cube $I^n$}.
    
    We give an example that shows it cannot be generalized to path-connected sets. Additionally, we show that a version of the Steinhaus Chessboard Theorem and the Brouwer Fixed Point Theorem are simple consequences of this result.
	\end{abstract}
	\bigskip
	
	\noindent
	{\bf Keywords}: topological combinatorics, Steinhaus Chessboard Theorem, Poincaré-Miranda Theorem, connected components, clustered chromatic number, Brouwer Fixed Point Theorem, fibers of continuous functions.
	\medskip
	
	\noindent
	\emph{Mathematics Subject Classification (2020):} 54D05, 54C05, 05C15, 51M99.
	
\section{Introduction}
There are many combinatorial and continuous results concerning the problems in which the structure of some objects is examined in relation to the opposite faces of the $n$-dimensional unit cube $I^n$ or antipodal points of the $n$-dimensional unit sphere $S^n$. The well-known continuous examples are the celebrated Poincaré-Miranda Theorem \cite{miranda}, which is equivalent to the Brouwer Fixed Point Theorem \cite{kulpa}, or the Borsuk-Ulam Theorem \cite{matouvsek}. Among the combinatorial examples, it is essential to mention the Steinhaus Chessboard Theorem \cite{kalejdoskop} and its some generalization \cite[Theorem $1$]{chessboard} provided by Tkacz and Turzański, by means of which the Poincaré-Miranda Theorem can be easily proved \cite[Theorem $3$]{chessboard}, or the Tucker's Lemma \cite{freund} which is a combinatorial analog of the Borsuk-Ulam Theorem. In addition, for related problems we recommend referring to, for instance, \cite{combi6}, \cite{combi1}, \cite{combi2}, \cite{combi3}, \cite{combi5}, \cite{combi4}, \cite{kulpa}, \cite{kulpaparametric}, \cite{waksman}, or \cite{kidawa}.  

We present the following formulations of the Brouwer Fixed Point Theorem, Poincaré-Miranda Theorem, its parametric extension, and a version of the Steinhaus Chessboard Theorem that we use in this paper (cf. \cite[Theorems 1 and 3 and 5]{chessboard}, \cite{kulpa}).

\begin{tw}[Brouwer Fixed Point Theorem]
    Let $n \in \mathbb{N}$. Every continuous function $f \colon I^n \to I^n$ has at least one fixed point i.e. there exists $x_0 \in I^n$ such that $f(x_0) = x_0$.
\end{tw}

\begin{tw}[Poincaré-Miranda Theorem]
    Let $n \in \mathbb{N}$ and $f = (f_1, \ldots, f_n) \colon I^n \to \mathbb{R}^n$ be a~continuous function such that for each $i \in [n]$, it follows that\footnote{See Definition \ref{decompositiondefi}.} $\restr{f_i}{I^n_{i, -}} \le 0$ and $\restr{f_i}{I^n_{i, +}} \ge 0$. Then, there exists a point $x_0 \in I^n$ such that $f(x_0) = (0, \ldots, 0)$.
\end{tw}

\begin{tw}[Parametric extension of the Poincaré-Miranda Theorem]\label{parammira}
    Let $n \in \mathbb{N}$ and $f \colon I^n \times I \to \mathbb{R}^n, f = \brac{f_1, \ldots, f_n}$ be a continuous function such that for each $i \in [n]$, it follows that $\restr{f_i}{I^n_{i, -} \times I} \le 0$ and $\restr{f_i}{I^n_{i, +} \times I} \ge 0$. Then, there exists a connected subset $W \subset f^{-1}\bracl{\left\{0\right\}}$ such that $W \cap \brac{I^n \times \left\{0\right\}} \neq \emptyset \neq W \cap \brac{I^n \times \left\{1\right\}}$.
\end{tw}

\begin{tw}[Version of the Steinhaus Chessboard Theorem]\label{chessboardthm}
Let $n, k \in \mathbb{N}$ and\footnote{See Definition \ref{decompositiondefi}.} $F \colon \mathcal{K}_k^n \to [n]$. Then, there exist $p \in [n]$ and $\mathcal{S} \subset F^{-1}\bracl{\left\{p\right\}}$ such that $\bigcup \mathcal{S}$ connects some opposite faces of $I^n$.
\end{tw}

Some of the aforementioned literature includes results concerning problems related to the opposite faces of $I^n$, where a given continuous or discrete continuous function is considered. These results generally pertain to situations in which certain boundary conditions are imposed on the function, with the exception of the result of Waksman and Wasilewsky \cite{waksman}, which addresses the two-dimensional case of Theorem \ref{contthm} presented below. Nevertheless, there appears to be few comparable results in the existing literature that consider arbitrary continuous or discrete continuous functions without imposing any restrictions on their values. The objective of this paper is to establish results that partially address this issue. 

In this context, we provide the following combinatorial result, which constitutes a~relationship with the version of the Steinhaus Chessboard Theorem, and derive its continuous version, referred to as Theorem \ref{contthm}. 

\begin{twmainA}\label{maincombi}
Let $n \in \mathbb{N}, m \in \mathbb{Z}_+$ be such that $0 \le m \le n-1$. Then, there exists a constant $C_{n, m}>0$ such that the following property holds:
\begin{itemize}
    \item[] Let $k \in \mathbb{N}$ and let $F \colon \mathcal{K}_k^n \to \mathbb{Z}^{n-1}$ be a function such that $\norm{F(K_1) - F(K_2)}_\infty \le 1$ if $\dim\brac{K_1 \cap K_2} \ge m$. Then there exist a $1$-connected subset $P \subset \mathbb{Z}^{n-1}$ with $\abs{P} \le C_{n, m}$ and a~subset $\mathcal{S} \subset F^{-1}\bracl{P}$ such that $\bigcup \mathcal{S}$ connects some opposite faces of $I^n$.
\end{itemize}
\end{twmainA}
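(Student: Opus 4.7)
The plan is to derive Theorem~\ref{maincombi} from the $n$-dimensional Steinhaus Chessboard Theorem (Theorem~\ref{chessboardthm}) by encoding $F$ as an auxiliary $n$-colouring of $\mathcal{K}_k^n$ whose monochromatic connecting family automatically has $F$-values confined to a small $1$-connected region of $\mathbb{Z}^{n-1}$.

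Concretely, I would first fix a bounded, $1$-connected fundamental block $B\subset\mathbb{Z}^{n-1}$, together with a periodic tiling of $\mathbb{Z}^{n-1}$ by translates of $B$ and an assignment of labels from $\{1,\dots,n\}$ to the tiles satisfying the following \emph{separation property}: any two distinct tiles carrying the same label lie at $\ell_\infty$-distance at least two from one another. The cardinality $|B|$ will then serve as the constant $C_{n,m}$. Given such a labelled tiling, define $\chi\colon\mathcal{K}_k^n\to\{1,\dots,n\}$ by letting $\chi(K)$ be the label of the unique tile containing $F(K)$. The Lipschitz hypothesis combined with the separation property yields the crucial observation: whenever $K_1,K_2$ are $m$-adjacent with $\chi(K_1)=\chi(K_2)$, the points $F(K_1)$ and $F(K_2)$ lie in the \emph{same} tile, since a jump to a different tile of the same label would force $\norm{F(K_1)-F(K_2)}_\infty\geq 2$, contradicting the hypothesis.

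Applying Theorem~\ref{chessboardthm} to $\chi$ then produces a label $i$ and a family $\mathcal{S}\subset\chi^{-1}(i)$ whose union connects some pair of opposite faces of $I^n$ via a facet-adjacent chain; since facet-adjacency gives $\dim\brac{K_1\cap K_2}=n-1\geq m$ for any two consecutive cubes in the chain, the Lipschitz condition is available throughout. Propagating the crucial observation along this chain forces every $F(K)$ for $K\in\mathcal{S}$ to lie in one single tile, which I would take as $P$. By construction $P$ is $1$-connected and $|P|\leq C_{n,m}$, so the pair $(P,\mathcal{S})$ has all the required properties.

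The main obstacle is the construction of the labelled tiling. A proper colouring of the king-adjacency graph on $\mathbb{Z}^{n-1}$ requires $2^{n-1}$ colours, which exceeds $n$ already for $n\geq 3$, so a uniform small-cube tiling with only $n$ labels cannot satisfy the separation property in the naive way. I expect one has to choose $B$ anisotropically — for instance as a long, thin slab padded in one direction — so that the effective tile graph becomes sufficiently lower-dimensional for $n$ labels to suffice; the dependence of $C_{n,m}$ on the parameter $m$ would then enter through this geometric step, as smaller $m$ admits more cube-pairs into the Lipschitz constraint and correspondingly forces a larger block $B$. Once the geometric ingredient is produced, the remainder of the argument is essentially bookkeeping.
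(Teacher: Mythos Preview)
Your overall strategy is the right one and in fact matches the paper's proof: colour $\mathbb{Z}^{n-1}$ with $n$ labels so that each colour class decomposes into bounded, well-separated $1$-connected clusters, compose with $F$ to get an $n$-colouring of $\mathcal{K}_k^n$, apply Theorem~\ref{chessboardthm}, and argue that the $F$-values along the connecting family stay inside a single cluster. The paper's Proposition~\ref{graph} supplies exactly the labelled tiling you describe as the ``main obstacle''.

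There is, however, a genuine gap in your propagation step. Theorem~\ref{chessboardthm} only asserts that $\bigcup\mathcal{S}$ is \emph{topologically} connected; by Observation~\ref{obsforj}~(1)--(3) this is the same as saying any two cubes in $\mathcal{S}$ are joined by a chain of \emph{corner}-adjacent cubes (i.e.\ $\dim(K_i\cap K_{i+1})\ge 0$), not a facet-adjacent one. So for $m\ge 1$ the hypothesis $\norm{F(K_1)-F(K_2)}_\infty\le 1$ is \emph{not} directly available along the chain, and your ``crucial observation'' with separation $\ge 2$ does not propagate. (Concretely, two cubes sharing only a vertex can have $\norm{F(K_1)-F(K_2)}_\infty$ as large as $m+1$.)

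The repair is exactly what the paper does: first bootstrap the Lipschitz assumption via Observation~\ref{obsforj}~(6) to obtain $\norm{F(K_1)-F(K_2)}_\infty\le m+1$ whenever $K_1\cap K_2\neq\emptyset$, and then demand that same-label tiles lie at $\ell_\infty$-distance strictly greater than $m+1$ rather than merely $\ge 2$. This is precisely the $(m{+}1)$-distance clustered $n$-colouring of $G_{n-1}^\infty$ built in Proposition~\ref{graph}, and it is here --- in the required separation, not in the shape of $B$ --- that the dependence on $m$ actually enters. Your closing remark that ``smaller $m$ \dots\ forces a larger block $B$'' has the monotonicity backwards: smaller $m$ imposes more Lipschitz constraints on $F$, so the problem becomes easier and $\widehat{C}_{n,m}$ is non-decreasing in $m$.
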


While Theorem \ref{chessboardthm} involves at most $n$ colors with any arrangement on the chessboard, our result does not impose any restrictions on the number of colors but does on their arrangement on the chessboard. In this context, the condition imposed on the function $F$ is intended to emulate a discrete continuous function. A particularly intriguing aspect related to this result is the matter of the optimal constants $C_{n, m}$, which we denote by $\widehat{C}_{n, m}$. We obtain partial results concerning this issue (see Section \ref{minimalconst}). Namely, we prove that in dimension two the optimal constants satisfy $\widehat{C}_{2, 0} = \widehat{C}_{2, 1} = 1$. We further show that, in contrast, for dimensions $n\ge 3$ the optimal constants are strictly greater than $1$. Finally, we determine the exact value $\widehat{C}_{3, 0} = 2$.

As a corollary derived from the combinatorial result we shall present the following Poincaré-Miranda type theorem of a purely continuous nature.

\begin{twmainB}\label{contthm}
Let $n \in \mathbb{N}$ and $f \colon I^n \to \mathbb{R}^{n-1}$ be a continuous function. Then, there exist a point $p \in \mathbb{R}^{n-1}$ and a compact subset $S \subset f^{-1}\bracl{\left\{p\right\}}$ which connects some opposite faces of $I^n$.
\end{twmainB}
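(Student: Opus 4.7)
The plan is to deduce Theorem \ref{contthm} from Theorem \ref{maincombi} by a standard discretize-and-pass-to-the-limit argument, the point being that the constant $C_{n,m}$ supplied by Theorem \ref{maincombi} does not depend on the discretization scale $k$.

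First I would fix $m = n-1$ (any admissible $m$ would work equally well) and set $C = C_{n,n-1}$. For each $k \in \mathbb{N}$ I would pick an auxiliary integer $L_k$ with $L_k \to \infty$, but growing slowly enough relative to the modulus of continuity of $f$ that $L_k \cdot \sup\{\|f(x)-f(y)\|_\infty : \|x-y\|_\infty \le 1/k\} \le 1/2$. Then, letting $c_K$ denote the center of the cube $K \in \mathcal{K}_k^n$, I would define $F_k(K) \in \mathbb{Z}^{n-1}$ componentwise by $F_k(K)_i = \lfloor L_k f_i(c_K)\rfloor$. This choice forces $\|F_k(K_1) - F_k(K_2)\|_\infty \le 1$ whenever $K_1, K_2$ share an $(n{-}1)$-face, so Theorem \ref{maincombi} produces a $1$-connected set $P_k \subset \mathbb{Z}^{n-1}$ of cardinality at most $C$ together with a subfamily $\mathcal{S}_k \subset F_k^{-1}[P_k]$ whose union $S_k := \bigcup \mathcal{S}_k$ is a compact connected subset of $I^n$ that connects some pair of opposite faces. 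Since $P_k$ is $1$-connected of size at most $C$, it has $\ell_\infty$-diameter at most $C-1$, and together with the definition of $F_k$ and the choice of $L_k$ this shows that $f(S_k)$ is contained in an $\ell_\infty$-ball of radius $O(1/L_k)$ in $\mathbb{R}^{n-1}$.

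Next I would take three nested compactness limits. There are only $n$ pairs of opposite faces of $I^n$, so by pigeonhole I may pass to a subsequence along which every $S_k$ connects the same pair of opposite faces $A$ and $B$. The hyperspace of nonempty compact subsets of $I^n$ equipped with the Hausdorff metric is itself compact, so a further extraction gives a compact $S \subset I^n$ with $S_k \to S$ in the Hausdorff metric. Since $f(I^n)$ is compact and $\diam f(S_k) \to 0$, one last extraction produces a point $p \in f(I^n)$ with $f(S_k) \to \{p\}$ in the Hausdorff metric.

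It then remains to verify that the pair $(S,p)$ has the properties claimed. A Hausdorff limit of a sequence of continua in $I^n$ is itself a continuum, so $S$ is compact and connected. The faces $A$ and $B$ are closed, and each $S_k$ meets both, so the Hausdorff convergence forces $S$ to meet both as well; hence $S$ connects $A$ and $B$. Finally, for each $x \in S$ one may pick $x_k \in S_k$ with $x_k \to x$; continuity of $f$ together with $\diam f(S_k) \to 0$ gives $f(x) = \lim f(x_k) = p$, so $S \subset f^{-1}[\{p\}]$. The only mildly delicate point in the argument is coordinating the three scales $1/k$, $1/L_k$, and the Hausdorff extractions, but this is tractable precisely because $C_{n,m}$ is independent of~$k$.
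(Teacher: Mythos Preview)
Your proposal is correct and follows essentially the same route as the paper: discretize, apply Theorem~\ref{maincombi} at each scale to obtain connected sets whose $f$-image has shrinking diameter, then pass to a Hausdorff limit and invoke Theorem~\ref{golab}. The only differences are cosmetic---the paper takes $m=0$ and discretizes via a Lebesgue-number argument for a cover of $I^{n-1}$ by slightly enlarged cubes, while you take $m=n-1$ and discretize via $\lfloor L_k f_i(c_K)\rfloor$; both produce an admissible $F_k$ for Theorem~\ref{maincombi}, and the compactness/limit step is identical.
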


It turns out the result simply implies the Brouwer Fixed Point Theorem (so also the Poincaré-Miranda Theorem) and the version of the Steinhaus Chessboard Theorem, see Section \ref{sectionconseq}. In this manner, we demonstrate the equivalence between this result and the latter one. To the best of our knowledge, Theorem \ref{contthm} does not possess a direct reference in the literature and may, in fact, be unknown. We emphasize, however, that the case $n=2$ is known (see \cite{waksman}), but the proof relies essentially on the topology of $\mathbb{R}^2$. Let us notice that Theorem \ref{contthm} can be read as a substitute for the parametric extension of the Poincaré-Miranda Theorem (Theorem \ref{parammira}) which does not require any information about the values of the considered continuous function.
\medskip

The remainder of the paper is structured as follows. In Section \ref{prelim} we provide some definitions and technical facts that will be useful in next sections. Section \ref{sectionclustered} is an introduction to the notion of $m$-distance clustered chromatic numbers. Section \ref{sectionmaincombi} is devoted to the proof of Theorem \ref{maincombi} i.e. main combinatorial result. It employs Theorem \ref{chessboardthm} and the result regarding $m$-distance clustered chromatic numbers from Section \ref{sectionclustered}. In Section \ref{minimalconst} we examine the minimum constants that concern Theorem \ref{maincombi}. Section \ref{sectionmainconti} is dedicated to the proof of the continuous result, Theorem \ref{contthm}, that uses Theorem \ref{maincombi} to solve the approximate problem, and then the machinery of Hausdorff distance is used to complete the proof. We also include Example \ref{counterexamplesinus} which shows Theorem \ref{contthm} cannot be generalized to path-connected sets. Finally, in Section \ref{sectionconseq} we provide the proofs of the Brouwer Fixed Point Theorem and the version of the Steinhaus Chessboard Theorem using Theorem \ref{contthm}.

\section{Preliminaries}\label{prelim}

\textbf{Notations and conventions:} Let $\mathbb{Z}^0 = \mathbb{R}^0 = \left\{0\right\}$, $\mathbb{Z}_+ = \mathbb{N} \cup \left\{0\right\}$, $[N] := \left\{1, \ldots, N\right\}$ for $N > 0$, and $[0] := \emptyset$. We set $\sum_{i=n_1}^{n_2} a_i = 0$ if $n_1 > n_2$. For a function $f \colon X \to Y$ and subsets $X_0 \subset X$ and $Y_0 \subset Y$, symbols $f\bracl{X_0}$ and $f^{-1}\bracl{Y_0}$ mean the image of $X_0$ and the preimage of $Y_0$ under $f$, respectively. For a topological space $X$ and subsets $Y, A \subset X$, symbol $\partial A$ means the boundary of $A$ and symbol $\partial_Y A$ means the boundary of $A$ in the topology of $Y$. Symbols $\interior{A}$ and $\overline{A}$ mean the interior of $A$ and the closure of $A$, respectively. By $\dim X$ we denote the topological dimension of space $X$. We use that notion to simplify some statements. For $n \in \mathbb{N}$, $x \in \mathbb{R}^n$ and $r>0$, by $B(x, r)$ we denote the open ball in $\mathbb{R}^n$ with the center at the point $x$ and radius $r>0$ considered with the standard norm. The closed ball is denoted by $\overline{B}(x, r)$. By $B_{l^\infty}(x, r)$ we denote the open ball considered with the $l^\infty$ norm so $B_{l^\infty}(x, r)$ is an open cube. If $A \subset \mathbb{R}^n$ for some $n \in \mathbb{N}$, then by $\diam A$ we denote the diameter of the set $A$ in the standard norm. For $n \in \mathbb{N}$, whenever we consider $\mathbb{Z}^n$ as the space, we are dealing with the $l^\infty$ norm. If $a, b \in \mathbb{Z}$ and $b \neq 0$, then $a \pmod{b}$ means the remainder of dividing $a$ by $b$.

\begin{defi}\label{decompositiondefi}
    Let $n, k\in \mathbb{N}$ and $I^n = [0, 1]^n$ be the $n$-dimensional unit cube. We define a family of cubes
\begin{align*}
\mathcal{K}_k^n := \left\{\prod_{s=1}^n \left[\frac{i_s-1}{k}, \frac{i_s}{k}\right] \colon i_s \in [k]\right\}
\end{align*}
that is the division of $I^n$ into $k^n$ cubes, and a bijection $j_{n, k} \colon [k]^{n} \to \mathcal{K}^n_{k}$ such that 
\begin{align*}
j_{n, k}(i_1, \ldots, i_n) = \prod_{s=1}^n \bracl{\frac{i_s-1}{k}, \frac{i_s}{k}}.
\end{align*}
For $i \in [n]$ we denote $i$th opposite faces of the $n$-dimensional unit cube as 
\begin{align*}
    I_{i, -}^n = \left\{z \in I^n \colon z_i = 0\right\}, \, I_{i, +}^n = \left\{z \in I^n \colon z_i = 1\right\}.
\end{align*}

Moreover we say that a subset $S \subset I^n$ \texttt{connects $i$th opposite faces of $I^n$} if $S$ is connected and $S \cap I_{i, -}^n  \neq \emptyset \neq S \cap I_{i, +}^n$. Since the specific $i$ is often irrelevant in our considerations, we say that the subset $S \subset I^n$ \texttt{connects some opposite faces of $I^n$} if $S$ connects $i$th opposite faces of $I^n$ for some $i \in [n]$.
\end{defi}

\begin{defi}

Let $(X, d)$ be a metric space, $N>0$ and $r > 0$. We say a sequence $p \colon [N] \to X$ is an~\texttt{$r$-path} if $d(p_j, p_{j+1}) \le r$ for all indices $j \in [N-1]$.

We say that subset $S \subset X$ is an \texttt{$r$-connected} set if for every $x, y \in S$ there exist $N > 0$ and an~$r$-path $p \colon [N]\to S$ such that $p_1 = x$ and $p_N = y$. The set $S \subset X$ is an \texttt{$r$-connected component of $X$} if it is a maximal $r$-connected set i.e. there is no $r$-connected set $S' \subset X$ such that $S \subsetneq S'$.  
\end{defi}

Let us notice that every connected set is $r$-connected for all $r >0$ but the converse implication does not hold in general i.e. there exist sets which are $r$-connected for all $r>0$ but not connected. As an~example, we can take the set of rational numbers $\mathbb{Q}$ endowed with the standard topology. 

\begin{obs}\label{obsforj}
Let $n, k \in \mathbb{N}$, $F \colon \mathcal{K}_k^n \to \mathbb{Z}^{n-1}$ and $m \in \mathbb{Z}_+$ be such that $0 \le m \le n-1$. Let $\mathcal{S} \subset \mathcal{K}_k^n$ and $i, i' \in [k]^n$. Then,\footnote{These facts should be visually clear but for the sake of completeness we provide a short argument.}
\begin{enumerate}
    \item the set $\bigcup \mathcal{S}$ is connected if and only if for every $K, K' \in \mathcal{S}$ there exists a sequence $p \colon \bracl{N} \to \mathcal{S}$ for some $N>0$ such that $p_1 = K$, $p_N = K'$ and $p_s \cap p_{s+1} \neq \emptyset$ for all $s \in \bracl{N-1}$;
    \item $j_{n, k}(i) \cap j_{n, k}(i') \neq \emptyset$ if and only if $\norm{i - i'}_\infty \le 1$;
    \item the set $\bigcup \mathcal{S}$ is connected if and only if the set $j^{-1}_{n, k}\bracl{\mathcal{S}}$ is $1$-connected;
    \item if $\bigcup \mathcal{S}$ connects some opposite faces of $I^n$, then $\abs{\mathcal{S}} \ge k$;
    \item if $\dim\brac{K \cap K'} \ge r$ for some $K, K' \in \mathcal{K}_k^n$ and integer $0 \le r \le n-1$, then there exists $K_0 \in \mathcal{K}_k^n$ such that $\dim\brac{K \cap K_0} \ge r+1$ and $\dim\brac{K' \cap K_0} \ge n-1$;
    \item if for all $K, K' \in \mathcal{K}_k^n$ such that $\dim\brac{K \cap K'} \ge m$ we have $\norm{F\brac{K'} - F\brac{K}}_\infty \le 1$, then for all $K, K' \in \mathcal{K}_k^n$ such that $\dim\brac{K \cap K'} \ge 0$ it follows that $\norm{F\brac{K'} - F\brac{K}}_\infty \le m+1$.
\end{enumerate}
\end{obs}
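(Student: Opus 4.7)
The six claims are elementary consequences of one key identity: for $i,i'\in[k]^n$ with $\|i-i'\|_\infty\le 1$, $\dim\bigl(j_{n,k}(i)\cap j_{n,k}(i')\bigr)$ equals the number of coordinates on which $i$ and $i'$ agree, while if $\|i-i'\|_\infty\ge 2$ the two cubes are disjoint. This gives (2) directly by a coordinate-wise check on the component intervals. For (1), the ``if'' direction uses the standard fact that the union of a chain of pairwise intersecting closed connected sets is connected; for the converse, I would partition the finite family $\mathcal S$ by the equivalence relation ``there is such a chain'', note that distinct equivalence classes have disjoint unions (any common point would force two classes to merge via a length-two chain), and conclude that more than one class would exhibit $\bigcup\mathcal S$ as a disjoint union of two nonempty closed sets, contradicting connectedness. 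Item (3) is then immediate from (1) combined with (2).

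For (4), the projection $\pi_i$ onto the $i$th coordinate sends $\bigcup\mathcal S$ to a connected subset of $[0,1]$ containing both $0$ and $1$, hence onto all of $[0,1]$; since $\pi_i(K)$ for each $K\in\mathcal{K}_k^n$ is one of the $k$ intervals $\bigl[(s-1)/k,s/k\bigr]$, every such value of $s$ must be realized by at least one member of $\mathcal S$, giving $|\mathcal S|\ge k$. For (5), if $i=i'$ take $K_0=K$; otherwise $K\cap K'\ne\emptyset$ forces $|i_s-i'_s|\le 1$ for every $s$, so pick a coordinate $s^*$ with $i_{s^*}\ne i'_{s^*}$ and define $i_0$ by $i_0(s)=i'(s)$ for $s\ne s^*$ and $i_0(s^*)=i(s^*)$. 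Then $i_0$ agrees with $i'$ in $n-1$ coordinates, giving $\dim(K_0\cap K')\ge n-1$, and agrees with $i$ in the $\ge r$ coordinates on which $i$ and $i'$ already agreed together with the new coordinate $s^*$, giving $\dim(K_0\cap K)\ge r+1$.

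The main substance is (6), which I would prove by iterating (5). Setting $L_0:=K'$, I would show inductively that for each $0\le j\le m$ there is a cube $L_j\in\mathcal{K}_k^n$ with $\dim(K\cap L_j)\ge j$ and, for $j\ge 1$, also $\dim(L_{j-1}\cap L_j)\ge n-1\ge m$: the step from $L_j$ to $L_{j+1}$ is exactly an application of (5) to the pair $(K,L_j)$ with $r=j$, permissible as long as $j<m\le n-1$. After $m$ iterations the chain $K,L_m,L_{m-1},\ldots,L_1,L_0=K'$ consists of $m+2$ cubes whose consecutive pairs all have intersection dimension at least $m$; the hypothesis on $F$ together with the triangle inequality then yield $\|F(K)-F(K')\|_\infty\le m+1$. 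The only delicate point I anticipate is the bookkeeping that every consecutive pair in the final chain genuinely satisfies $\dim\ge m$, but that is precisely what the construction in (5) guarantees at each iteration.
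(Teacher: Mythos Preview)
Your proposal is correct and follows essentially the same route as the paper: items (1)--(3) use the same chain/equivalence-class argument combined with the coordinate-wise interval check, (5) is the identical construction (change one differing coordinate of $i'$ to match $i$), and (6) iterates (5) exactly as the paper does to build a chain of $m+2$ cubes. The only cosmetic difference is that your argument for (4) via the projection $\pi_i$ is slightly more explicit than the paper's one-line remark about side lengths, and you organize everything around the stated ``key identity'' for $\dim(j_{n,k}(i)\cap j_{n,k}(i'))$, which the paper uses implicitly.
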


\begin{proof}
$(1)$: Let us assume that $\bigcup \mathcal{S}$ is connected and nonempty and fix $K_0 \in \mathcal{S}$. Let $\mathcal{A}$ be the family of all cubes $K \in \mathcal{S}$ such that there exists a sequence $p \colon [N] \to \mathcal{S}$ for some $N>0$ with $p_1 = K_0$, $p_N = K$ and $p_s \cap p_{s+1} \neq \emptyset$ for all $s \in \bracl{N-1}$. It is enough to prove that $\mathcal{A} = \mathcal{S}$. Both sets $\bigcup \mathcal{A}$ and $\bigcup \brac{\mathcal{S} \setminus \mathcal{A}}$ are closed and mutually disjoint by the definition of $\mathcal{A}$. Moreover, since they sum up to the connected set $\bigcup \mathcal{S}$ and $\bigcup \mathcal{A}$ is nonempty, then $\bigcup \brac{\mathcal{S} \setminus \mathcal{A}}$ has to be empty so $\mathcal{A} = \mathcal{S}$. The converse implication is obvious since the assumption entails the set $\bigcup \mathcal{S}$ is path-connected so also connected.

$(2)$: The intersection $j_{n, k}(i) \cap j_{n, k}(i')$ is nonempty if and only if the intersection $\bracl{(i_s-1)/k, i_s/k} \cap \bracl{(i'_s-1)/k, i'_s/k}$ is nonempty for all $s \in [n]$ but it is equivalent to the fact that $\abs{i_s - i'_s} \le 1$ for all $s \in [n]$.

$(3)$: Let us assume that $\bigcup \mathcal{S}$ is connected and nonempty and take $i, i' \in j_{n, k}^{-1}\bracl{\mathcal{S}}$. Then, by $(1)$ there exists a sequence $p \colon \bracl{N} \to \mathcal{S}$ for some $N>0$ such that $p_1 = j_{n, k}\brac{i}$, $p_N = j_{n, k}\brac{i'}$ and $p_s \cap p_{s+1} \neq \emptyset$ for all $s \in \bracl{N-1}$. Let $\widehat{p} \colon \bracl{N} \to j_{n, k}^{-1}\bracl{\mathcal{S}}$ be defined as $\widehat{p}_s = j_{n, k}^{-1}\brac{p_s}$ for $s \in \bracl{N}$. Then $j_{n, k}\brac{\widehat{p}_{s+1}} \cap j_{n, k}\brac{\widehat{p}_s} \neq \emptyset$ for $s \in \bracl{N-1}$ so $\norm{\widehat{p}_{s+1} - \widehat{p}_s}_\infty \le 1$ from $(2)$. Therefore, $\widehat{p}$ is a $1$-path and $\widehat{p}_1 = i$, $\widehat{p}_N = i'$ so it proves the set $j_{n, k}^{-1}\bracl{\mathcal{S}}$ is $1$-connected. A proof of the converse implication proceeds similarly using $(1)$.

$(4)$: It is obvious since each cube from $\mathcal{K}_k^n$ has the side length of $1/k$ and $I^n$ has the side length of $1$.

$(5)$: Let $K = \prod_{i=1}^n I_i$ and $K' = \prod_{i=1}^n I_i'$ for some intervals $I_i, I_i' \subset \mathbb{R}$ of length $1/k$ where $i \in [n]$. Since $\dim\brac{K \cap K'} \ge r$, then $I_{i_s} = I_{i_s}'$ for $r$ distinct indices $i_s \in [n]$ where $s \in [r]$. Let $i_0 \in [n]$ be such that $i_0 \neq i_s$ for any $s \in [r]$. We define $\widehat{I}_{i_0} = I_{i_0}$ and $\widehat{I}_i = I_i'$ for $i \in [n] \setminus \left\{i_0\right\}$. Then, the cube $K_0 = \prod_{i=1}^n \widehat{I}_i$ meets the required conditions. 

$(6)$: The case $m=0$ is trivial so we can assume that $m>0$. Let $K, K' \in \mathcal{K}_k^n$ be such that $\dim\brac{K \cap K'} \ge 0$. Using observation $(5)$ repeatedly starting from $r=0$ and going through all $r$ to $r = m-1$, we conclude there exists a sequence $p \colon [m+2] \to \mathcal{K}_k^n$ such that $p_1 = K'$, $p_{m+2} = K$, $\dim\brac{p_s \cap p_{s+1}} \ge n-1$ for $s \in [m]$ and $\dim\brac{p_{m+1} \cap p_{m+2}} \ge m$. Thus, $\norm{F\brac{p_s} - F\brac{p_{s+1}}}_\infty \le 1$ for all $s \in [m+1]$ so $\norm{F\brac{K'} - F\brac{K}}_\infty = \norm{F\brac{p_1} - F\brac{p_{m+2}}}_\infty \le m+1$ from the triangle inequality. 
\end{proof}

The following Proposition \ref{toconstC21} plays a key role in Section \ref{minimalconst}.

\begin{prop}\label{toconstC21}
For $n \in \mathbb{N}$ let $S_0 \subset \mathbb{R}^n$ be a closed and connected set and $S \subset \mathbb{R}^n$ be a connected component of $\mathbb{R}^n \setminus S_0$. Then,
\begin{enumerate}
\item[(i)] the set $\partial S$ is connected;
\item[(ii)] $\partial S \subset S_0$;
\item[(iii)] for every family $\mathcal{U}$ of open and connected sets such that $\mathbb{R}^n \setminus S \subset \bigcup \mathcal{U}$ and $U \cap \brac{\mathbb{R}^n \setminus S} \neq \emptyset$ for all $U \in \mathcal{U}$ the set $S \cap \bigcup \mathcal{U}$ is open and connected.
\end{enumerate}
\end{prop}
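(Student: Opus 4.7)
I would handle the three parts in order, with (ii) and (i) being routine topology and (iii) being the subtle one.

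\textbf{Part (ii).} First, $S$ is open: it is a connected component of the open set $\mathbb{R}^n \setminus S_0$, and $\mathbb{R}^n$ is locally connected. Hence $\partial S = \overline{S} \setminus S$. Suppose $x \in \partial S$ but $x \notin S_0$. Then $x \in \mathbb{R}^n \setminus S_0$, which is open, so $x$ belongs to a connected component $S'$ of $\mathbb{R}^n \setminus S_0$; again $S'$ is open. Since $x \in \overline{S}$, every neighborhood of $x$ meets $S$, so $S' \cap S \neq \emptyset$, and since distinct components are disjoint, $S' = S$. But then $x \in S$, contradicting $x \in \partial S$.

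\textbf{Part (i).} The plan is to apply unicoherence of $\mathbb{R}^n$ to the covering $\mathbb{R}^n = \overline{S} \cup T$, where $T := \mathbb{R}^n \setminus S$. The set $\overline{S}$ is closed and connected. For $T$ I write $T = S_0 \cup \bigcup_{S' \neq S}\overline{S'}$, with $S'$ ranging over the other components of $\mathbb{R}^n \setminus S_0$. Part (ii) applied to each $S'$ gives $\partial S' \subset S_0$, hence $\overline{S'}$ meets the connected $S_0$, so $\overline{S'} \cup S_0$ is connected. Taking the union, $T$ is connected as a union of connected sets all containing $S_0$. Unicoherence now yields that $\overline{S} \cap T = \partial S$ (using openness of $S$) is connected.

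\textbf{Part (iii).} Openness is immediate. Set $V := \bigcup \mathcal{U}$. Since $T$ is connected (by the reasoning in (i)) and each $U \in \mathcal{U}$ meets $T$, each $U \cup T$ is connected, and $V = \bigcup_U(U \cup T)$ is a union of connected sets sharing $T$, hence connected. Applying unicoherence of $\mathbb{R}^n$ to $\mathbb{R}^n = \overline{V} \cup \overline{S}$ (both closed connected, and they cover $\mathbb{R}^n$ because $V \supset T$) gives that $\overline{V} \cap \overline{S}$ is connected; a short verification shows $\overline{V} \cap \overline{S} = \overline{V \cap S}$, so $\overline{V \cap S}$ is connected.

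For the connectedness of $V \cap S$ itself I argue by contradiction. Assume $V \cap S$ decomposes into connected components $\{A_\alpha\}_{\alpha \in I}$ with $|I| \geq 2$. Each $A_\alpha$ is open and, as I will verify, $\partial A_\alpha \cap V \subset T$ (a point of $\partial A_\alpha \cap V$ lying in $S$ would belong to another component $A_\beta$, forcing $A_\beta \cap A_\alpha \neq \emptyset$ via $A_\beta$ open and the closure condition). Moreover $\partial A_\alpha \cap V \neq \emptyset$, otherwise $A_\alpha$ would be clopen in the connected $V$, forcing $A_\alpha \in \{V,\emptyset\}$, neither admissible. Since $\overline{A_\alpha} \subset \overline{S}$ and $\overline{S} \cap T = \partial S$, each set $F_\alpha := \overline{A_\alpha}\cap \partial S$ is nonempty and closed in $\partial S$; and a short density argument shows $\partial S = \bigcup_\alpha F_\alpha$ (for $x \in \partial S$, approximate by a sequence in $S \cap V$ and pass to a subsequence lying in one $A_{\alpha_0}$).

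The main obstacle will be to prove that the $F_\alpha$ are pairwise disjoint; once that is done, we have a partition of the connected set $\partial S$ (part (i)) into more than one nonempty closed piece, which is the desired contradiction. To get disjointness, suppose $x \in F_\alpha \cap F_\beta$ with $\alpha \neq \beta$. Pick $U \in \mathcal{U}$ containing $x$; then $U$ is open connected, meets $T$, and every neighborhood of $x$ (in particular $U$) meets both $A_\alpha$ and $A_\beta$. I plan to derive a contradiction by analyzing the components of $U \cap S$ (each contained in a unique $A_\gamma$, and each with closure in $U$ meeting $U \cap T$) together with the connectedness of $U \cup T$; this is the step I expect to cost the most work, and it is where the specific assumption that $\mathcal{U}$ consists of connected sets meeting $T$ is used essentially.
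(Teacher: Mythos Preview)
Your arguments for (ii) and (i) are correct. For (i) the paper takes an equivalent route: it cites a lemma of Kuratowski to obtain that $\mathbb{R}^n\setminus S$ is connected (your explicit decomposition $T=S_0\cup\bigcup_{S'\neq S}\overline{S'}$ is essentially a proof of that lemma in this setting) and then a theorem of Czarnecki (for open connected $U\subset\mathbb{R}^n$, the boundary $\partial U$ is connected iff $\mathbb{R}^n\setminus U$ is) in place of unicoherence.

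For (iii) you are working much too hard, and the final step is genuinely incomplete. The paper's proof is one line after your observation that $V=\bigcup\mathcal{U}$ is connected: since $S$ and $V$ are both \emph{open} connected subsets of $\mathbb{R}^n$ with $S\cup V=\mathbb{R}^n$, their intersection $S\cap V$ is connected. The paper establishes this open-set version of unicoherence via the reduced Mayer--Vietoris sequence
\[
\widetilde H_1(\mathbb{R}^n)\longrightarrow\widetilde H_0(S\cap V)\longrightarrow\widetilde H_0(S)\oplus\widetilde H_0(V),
\]
with both ends vanishing. You already invoke unicoherence, but you apply it to the \emph{closures} $\overline{S}$ and $\overline{V}$, obtaining only that $\overline{S\cap V}$ is connected, and then attempt to descend to $S\cap V$.

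That descent is where the proposal breaks. First, your claim $\partial S=\bigcup_\alpha F_\alpha$ rests on ``passing to a subsequence lying in one $A_{\alpha_0}$'', but when infinitely many components $A_\alpha$ accumulate at a boundary point there need be no such subsequence; in general $\overline{\bigcup_\alpha A_\alpha}$ strictly contains $\bigcup_\alpha\overline{A_\alpha}$. Second, the pairwise disjointness of the $F_\alpha$---which you yourself flag as the crux---is not something you can read off from a single $U\in\mathcal{U}$ containing $x$: two distinct components of $S\cap V$ can perfectly well have closures meeting at the same point of $\partial S$, so the ``partition of the connected set $\partial S$ into nonempty closed pieces'' never materializes. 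The remedy is not to push this line further but to apply unicoherence in its open form to $S$ and $V$ directly.
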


\begin{proof}
Let us start by providing some useful facts.

\begin{lem}[{\cite[Section III Theorem $5$]{kuratowski}}]\label{Y}
Let $X$ be a connected space. If $S_0$ is a connected set and $S$ is a connected component of $X \setminus S_0$, then $X \setminus S$ is connected.
\end{lem}

\begin{tw}[{\cite{Czarnecki}}]\label{Z}
Let $n \in \mathbb{N}$ and $U \subset \mathbb{R}^n$ be an open and connected set. Then $\mathbb{R}^n \setminus U$ is connected if and only if $\partial U$ is connected.
\end{tw}

The following result is probably well-known but for the sake of completeness we provide a short proof.

\begin{tw}\label{vietoris}
For $n \in \mathbb{N}$ let $U, V \subset \mathbb{R}^n$ be open and connected sets such that $U \cup V = \mathbb{R}^n$. Then $U \cap V$ is connected.
\end{tw}

\begin{proof}
It is an easy consequence of the Mayer-Vietoris Theorem for Reduced Homology \cite[Corollary $6.4$]{rotman}. Indeed, we can assume that $U \cap V \neq \emptyset$ so there is an exact sequence
\begin{align*}
\widetilde{H}_1\brac{\mathbb{R}^n} \longrightarrow \widetilde{H}_0\brac{U \cap V} \longrightarrow \widetilde{H}_0\brac{U} \oplus \widetilde{H}_0\brac{V}.
\end{align*}
Since $\mathbb{R}^n$ is simply connected and sets $U$ and $V$ are path-connected we obtain $\widetilde{H}_1\brac{\mathbb{R}^n} = \widetilde{H}_0\brac{U} = \widetilde{H}_0\brac{V} = 0$ so $0 \longrightarrow \widetilde{H}_0\brac{U \cap V} \longrightarrow 0$. Thus $\widetilde{H}_0\brac{U \cap V} = 0$ so $U \cap V$ is path-connected.
\end{proof}

\begin{lem}\label{conn}
Let $X$ be a topological space, $S_0 \subset X$ be a connected set and $\mathcal{S}$ be an arbitrary family of connected subsets of $X$ such that $S_0 \subset \bigcup \mathcal{S}$ and $K \cap S_0 \neq \emptyset$ for all $K \in \mathcal{S}$. Then, $\bigcup \mathcal{S}$ is connected.
\end{lem}

\begin{proof}
We can assume that $\mathcal{S} \neq \emptyset$ so $S_0 \neq \emptyset$. Let $\mathcal{S}'$ be the family of all sets $K \cup S_0$ where $K \in \mathcal{S}$. Since $S_0$ is connected and $\mathcal{S}$ consists of connected sets with nonempty intersection with $S_0$, then every set from $\mathcal{S}'$ is connected. Moreover $S_0 \subset \bigcap \mathcal{S}'$ so the set $\bigcap \mathcal{S}'$ is nonempty and thus $\bigcup \mathcal{S}'$ is connected. Since $\bigcup \mathcal{S}' = \bigcup \mathcal{S}$, then $\bigcup \mathcal{S}$ is connected.
\end{proof}

Now we are in position to prove Proposition \ref{toconstC21}. Let us first indicate that since $\mathbb{R}^n \setminus S_0$ is open and $\mathbb{R}^n$ is locally connected, then $S$ is open. Moreover $\mathbb{R}^n \setminus S$ is connected by Lemma \ref{Y}.

    (i): Since $S$ is open and connected with the connected complement, then $\partial S$ is connected by Theorem \ref{Z}.
    
    (ii): Let us suppose that $\partial S \not\subset S_0$. Thus $\partial S \cap \brac{\mathbb{R}^n \setminus S_0} \neq \emptyset$. Let us take $x_0 \in \partial S \cap \brac{\mathbb{R}^n \setminus S_0}$. Then $S \cup \left\{x_0\right\} \subset \mathbb{R}^n \setminus S_0$ is connected. Moreover $S$ is open so $S \cap \partial S = \emptyset$ and then $S \subsetneq S \cup \left\{x_0\right\}$. It contradicts the fact that $S$ is a connected component of $\mathbb{R}^n \setminus S_0$.
    
    (iii): Since $\mathbb{R}^n \setminus S$ is connected, then $\bigcup \mathcal{U}$ is connected by virtue of Lemma \ref{conn}. Therefore, the sets $S$ and $\bigcup \mathcal{U}$ are open and connected and $S \cup \bigcup \mathcal{U} = \mathbb{R}^n$ so $S \cap \bigcup \mathcal{U}$ is connected by Theorem \ref{vietoris}.
\end{proof}

For the convenience of the reader we recall the notion of the Hausdorff distance that will be useful in the proof of Theorem \ref{contthm}. Let $(X, d)$ be a metric space. By $\mathfrak{C}(X)$ we denote the family of all compact and nonempty subsets of $X$. For $A, B \in \mathfrak{C}(X)$ we define the Hausdorff distance between $A$ and $B$ as follows:
\begin{align*}
d_H(A,B)= \inf \left\{\varepsilon>0 \colon\, A \subset \brac{B}_{\varepsilon}, \, B \subset \brac{A}_{\varepsilon} \right\},
\end{align*}
where 
\begin{align*}
\brac{A}_{\varepsilon} = \bigcup_{a \in A}B_d\brac{a, \varepsilon}
\end{align*}
and $B_d(a, \varepsilon)$ is the open ball in $(X, d)$ with center at the point $a$ and radius $\varepsilon$. It is well-known that $(\mathfrak{C}(X), d_H)$ is a metric space. Moreover, $(\mathfrak{C}(X), d_H)$ inherits some properties from $(X, d)$. For instance, we have the following fact.

\begin{tw}[{\cite[Proposition $7.3.7$, Theorem $7.3.8$]{burago}}]\label{blaschke}
    Let $X$ be a metric space.
    \begin{enumerate}
        \item If $X$ is complete, then $\mathfrak{C}(X)$ is complete.
        \item If $X$ is compact, then $\mathfrak{C}(X)$ is compact.
    \end{enumerate}
\end{tw}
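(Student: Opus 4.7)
The plan is to deduce both parts from the standard characterization that a metric space is compact if and only if it is complete and totally bounded. For (1), given a Cauchy sequence $(A_n)$ in $\mathfrak{C}(X)$, the natural candidate for its limit is
\begin{equation*}
A \, = \, \bigcap_{N=1}^{\infty} \overline{\bigcup_{n \ge N} A_n},
\end{equation*}
i.e.\ the set of cluster points of all selections $x_n \in A_n$. To show $A$ is nonempty, I first extract a subsequence $n_1 < n_2 < \ldots$ with $d_H(A_p, A_q) < 2^{-k}$ for all $p, q \ge n_k$, start from any $y_1 \in A_{n_1}$, and use the inclusion $A_{n_k} \subset (A_{n_{k+1}})_{2^{-k}}$ to pick inductively $y_{k+1} \in A_{n_{k+1}}$ with $d(y_k, y_{k+1}) < 2^{-k}$. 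The sequence $(y_k)$ is Cauchy in $X$; its limit, which exists by completeness of $X$, belongs to $A$.

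Next, $A$ is closed by construction, and I would establish compactness by proving it is totally bounded. Given $\varepsilon > 0$, pick $N$ with $d_H(A_n, A_N) < \varepsilon/2$ for all $n \ge N$, so that $\bigcup_{n \ge N} A_n \subset (A_N)_{\varepsilon/2}$ and hence $A \subset \overline{(A_N)_{\varepsilon/2}} \subset (A_N)_{\varepsilon}$. Because $A_N$ is already compact, it admits a finite $\varepsilon$-net, which doubles as a finite $2\varepsilon$-net for $A$. To finish (1) I must show $d_H(A_n, A) \to 0$. The inclusion $A \subset (A_n)_{\varepsilon}$ for large $n$ is immediate from the definition of $A$ together with the Cauchy property. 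For the reverse inclusion $A_n \subset (A)_{\varepsilon}$, I would rerun the Cauchy-extraction argument above starting from an arbitrary $x \in A_n$ instead of $y_1$, producing a limit point in $A$ at distance at most $O(\varepsilon)$ from $x$.

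For (2), since $X$ compact implies $X$ complete, part (1) already gives completeness of $\mathfrak{C}(X)$, so it only remains to prove total boundedness. Fix $\varepsilon > 0$ and a finite $\varepsilon$-net $E \subset X$; I claim that the finite family $\{F \subset E : F \ne \emptyset\}$ is an $\varepsilon$-net in $\mathfrak{C}(X)$. Indeed, for any $A \in \mathfrak{C}(X)$ set $F_A := \{e \in E : d(e, A) < \varepsilon\}$: automatically $F_A \subset (A)_{\varepsilon}$, and conversely every $a \in A$ is within $\varepsilon$ of some $e \in E$, which then belongs to $F_A$, yielding $A \subset (F_A)_{\varepsilon}$ and $d_H(A, F_A) \le \varepsilon$. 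The main obstacle is the convergence step in (1): one must verify carefully that the greedy selection $(y_k)$ genuinely converges to a point of $A$ and, when launched from an arbitrary $x \in A_n$, that the resulting limit is \emph{uniformly} close to $x$; the remaining arguments reduce to elementary total-boundedness bookkeeping via the triangle inequality and are conceptually routine.
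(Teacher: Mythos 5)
The paper does not prove this statement at all --- it is quoted from the cited reference of Burago--Burago--Ivanov, and your argument is precisely the standard proof given there: completeness via the candidate limit $A=\bigcap_{N}\overline{\bigcup_{n\ge N}A_n}$ together with the greedy Cauchy selection, and compactness via total boundedness using the nonempty subsets of a finite $\varepsilon$-net of $X$. Your outline is correct, and the steps you defer (convergence of the selection $(y_k)$, and the uniform bound $d(x,y)\le\sum_k \varepsilon 2^{-k-1}\le\varepsilon$ when the selection is launched from an arbitrary $x\in A_n$) are indeed routine geometric-series bookkeeping.
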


The following observation is an easy and well-known consequence of the definition of the Hausdorff distance.

\begin{obs}\label{hausdorff}
Let $(X, d)$ be a metric space and let there be given a sequence of sets $A_n \in \mathfrak{C}(X)$ convergent to some set $A \in \mathfrak{C}(X)$ in Hausdorff sense. Then
\begin{enumerate}
\item for every $a \in A$ there exists a sequence of points $a_n \in A_n$ convergent to $a$;
\item every sequence of points $a_n \in A_n$ has a subsequence $a_{n_m}$ convergent to some point $a \in A$.
\end{enumerate}
\end{obs}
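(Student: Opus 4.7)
The plan is to read off both statements directly from the definition of $d_H$ together with the compactness of the sets involved. Set $\varepsilon_n := d_H(A_n, A) + 1/n$, which tends to $0$ by hypothesis. By the defining infimum, we have $A \subset (A_n)_{\varepsilon_n}$ and $A_n \subset (A)_{\varepsilon_n}$ for every $n$, and these two inclusions will drive the two halves of the proof.

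For part (1), fix $a \in A$. From $A \subset (A_n)_{\varepsilon_n} = \bigcup_{x \in A_n} B_d(x, \varepsilon_n)$, there exists some $a_n \in A_n$ with $d(a_n, a) < \varepsilon_n$. Since $\varepsilon_n \to 0$, the sequence $(a_n)$ converges to $a$, which is all that is required.

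For part (2), let $(a_n)$ be an arbitrary sequence with $a_n \in A_n$. By the other inclusion $A_n \subset (A)_{\varepsilon_n}$, for every $n$ there exists $b_n \in A$ with $d(a_n, b_n) < \varepsilon_n$. The set $A$ is compact by assumption, so the sequence $(b_n) \subset A$ admits a subsequence $b_{n_m}$ converging to some $a \in A$. Then the triangle inequality gives
\begin{align*}
d(a_{n_m}, a) \le d(a_{n_m}, b_{n_m}) + d(b_{n_m}, a) < \varepsilon_{n_m} + d(b_{n_m}, a) \xrightarrow[m \to \infty]{} 0,
\end{align*}
so the subsequence $a_{n_m}$ converges to $a \in A$, as required.

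There is no real obstacle here: once one unpacks the definition of $d_H$, part (1) is a one-line selection argument, and the only nontrivial ingredient for part (2) is the compactness of the limit set $A$, which lets us pass from an approximating sequence $(b_n) \subset A$ to a convergent subsequence whose limit automatically lies in $A$ (using that $A$ is closed as a compact subset of a metric space). The closedness of the $A_n$ is not used; what matters for the argument is that $A \in \mathfrak{C}(X)$.
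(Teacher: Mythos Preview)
Your proof is correct and is exactly the standard argument; the paper itself does not give a proof, stating only that the observation ``is an easy and well-known consequence of the definition of the Hausdorff distance.'' Your write-up fills this in cleanly, and your observation that only the compactness of the limit set $A$ is needed (not of the $A_n$) is accurate.
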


\begin{tw}[{\cite[Section XI, Exercise $1$]{whyburn}}]\label{golab}
    Let $(X, d)$ be a metric space and $A \in \mathfrak{C}(X)$. If there exists a sequence of connected sets $A_n \in \mathfrak{C}(X)$ convergent to $A$ in Hausdorff sense, then $A$ is connected.
\end{tw}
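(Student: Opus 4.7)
The plan is to argue by contradiction. Suppose $A$ were disconnected. Since $A$ is compact and Hausdorff, there exist nonempty disjoint \emph{compact} sets $B, C$ with $A = B \cup C$. Compactness guarantees that $\rho := \operatorname{dist}(B, C) > 0$. Set $\delta := \rho/3$ and introduce the open neighborhoods
\begin{equation*}
U := (B)_\delta = \bigcup_{b \in B} B_d(b,\delta), \qquad V := (C)_\delta = \bigcup_{c \in C} B_d(c,\delta).
\end{equation*}
The triangle inequality together with $\rho > 2\delta$ forces $U \cap V = \emptyset$, and by construction $A \subset (A)_\delta = U \cup V$.

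Next I would exploit the Hausdorff convergence $A_n \to A$ in two ways. First, by the definition of $d_H$, for all sufficiently large $n$ one has $A_n \subset (A)_\delta \subset U \cup V$. Second, by Observation \ref{hausdorff}(1), fixing any $b_0 \in B$ and any $c_0 \in C$, there exist sequences $x_n, y_n \in A_n$ with $x_n \to b_0$ and $y_n \to c_0$. Hence $x_n \in U$ and $y_n \in V$ for all sufficiently large $n$, and in particular $A_n \cap U$ and $A_n \cap V$ are both nonempty.

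Pick one such $n$. Then $A_n = (A_n \cap U) \cup (A_n \cap V)$ expresses $A_n$ as a disjoint union of two nonempty sets that are relatively open in $A_n$, since $U$ and $V$ are open in $X$ and disjoint. This contradicts the connectedness of $A_n$, completing the proof.

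No serious obstacle is expected; the whole argument is a routine combination of the definition of $d_H$ with the standard separation of two disjoint compact sets by open neighborhoods. The only point requiring mild care is choosing $\delta$ strictly smaller than half of $\operatorname{dist}(B,C)$ so that both $U \cap V = \emptyset$ and the inclusion $(A)_\delta \subset U \cup V$ hold simultaneously for the same $\delta$; any $\delta < \rho/2$ works, and $\delta = \rho/3$ is a convenient choice.
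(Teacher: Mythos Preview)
The paper does not supply a proof of this theorem; it is stated as a known result with a citation to Whyburn's book (where it appears as an exercise), so there is nothing to compare your argument against. Your proof is correct and is precisely the standard argument one would expect: separate the two compact pieces of a hypothetical disconnection by disjoint open $\delta$-neighborhoods, and use the definition of Hausdorff convergence to trap some $A_n$ inside their union while meeting both, contradicting connectedness of $A_n$.
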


\begin{tw}[{\cite[Theorem $2.29$]{lee}}]\label{extending}
    Let $n \in \mathbb{N}$ and $K \subset \mathbb{R}^n$ be a closed set. Then, there exists a~smooth function $f \colon \mathbb{R}^n \to \mathbb{R}$ such that $f^{-1}\bracl{\left\{0\right\}} = K$. 
\end{tw}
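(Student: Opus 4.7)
The plan is to construct $f$ as a weighted countable sum of compactly supported smooth bump functions whose supports exhaust $U := \mathbb{R}^n \setminus K$ and never meet $K$. First I would exploit that $U$ is an open subset of $\mathbb{R}^n$, hence second countable and locally compact, to pick a countable family of open balls $B_i = B(x_i, r_i)$ with $\overline{B}_i \subset U$ and $\bigcup_{i \in \mathbb{N}} B_i = U$. For each $i$ I then fix a standard smooth bump $\phi_i \colon \mathbb{R}^n \to [0, \infty)$ with $\supp \phi_i = \overline{B}_i$ and $\phi_i > 0$ on $B_i$; a concrete choice is
\[
\phi_i(x) := \exp\!\left(-\frac{1}{r_i^2 - \norm{x - x_i}^2}\right) \quad \text{on } B_i,
\]
extended by zero outside $B_i$.

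Second, I would set $f := \sum_{i=1}^\infty c_i \phi_i$, where the positive coefficients $c_i$ are chosen by a diagonal trick: they must satisfy
\[
c_i \cdot \sup_{x \in \mathbb{R}^n} \, \max_{\abs{\alpha} \le i} \abs{\partial^\alpha \phi_i(x)} \le 2^{-i}.
\]
Such $c_i$ exist because each $\phi_i$ is smooth and compactly supported, so all the relevant suprema are finite. With this choice, for every fixed multi-index $\alpha$ the tail $\sum_{i \ge \abs{\alpha}} c_i \partial^\alpha \phi_i$ is dominated uniformly on $\mathbb{R}^n$ by the convergent series $\sum_{i \ge \abs{\alpha}} 2^{-i}$. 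The standard theorem on term-by-term differentiation of uniformly convergent series then yields $f \in C^\infty(\mathbb{R}^n)$ with $\partial^\alpha f = \sum_i c_i \partial^\alpha \phi_i$ for every $\alpha$.

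Third, I would identify the zero set directly. If $x \in K$, then $x \notin \supp \phi_i$ for any $i$, so every term in the sum vanishes and $f(x) = 0$. Conversely, if $x \in U$, then $x \in B_i$ for some $i$, so $c_i \phi_i(x) > 0$ while all other summands are non-negative, giving $f(x) > 0$. Hence $f^{-1}[\{0\}] = K$, as required.

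The main obstacle is analytic rather than geometric: the construction must yield a function that is $C^\infty$ on all of $\mathbb{R}^n$, including at the boundary points of $K$ where the supports of infinitely many of the $\phi_i$ may cluster arbitrarily densely. The diagonal estimate on the $c_i$ is precisely what absorbs this clustering, letting each partial derivative of each order be bounded by a convergent geometric tail, whereas all the set-theoretic content of the theorem is trivial once the weighted sum is smooth.
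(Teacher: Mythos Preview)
Your argument is correct and is essentially the standard proof (indeed, it is the argument given in the cited reference by Lee). The paper itself does not supply a proof of this statement --- it is simply quoted as a known result --- so there is no in-paper argument to compare against.
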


\section{$m$-distance clustered chromatic number}\label{sectionclustered}
The following definition is a generalization based on the concept of \texttt{clustered chromatic number} (see \cite{clusterednumber} for more information) which comes from the graph theory.
\begin{defi}
    Let $G$ be a graph, $V$ be the set of its vertices and $n, m \in \mathbb{N}$. We say that a function $f \colon V \to [n]$ is an \texttt{$m$-distance clustered $n$-coloring of $G$} if there exists a constant $K_{\star} > 0$ such that for every $i \in [n]$ all connected components of the subgraph induced by the set $f^{-1}\bracl{\left\{i\right\}}$ have at most $K_{\star}$ vertices and the distance between any two of these connected components is bigger than $m$.
    
    We define an \texttt{$m$-distance clustered chromatic number} of $G$ as the minimum natural number $n$ for which there exists an $m$-distance clustered $n$-coloring of $G$. If there is no such number $n$, we set the $m$-distance clustered chromatic number of $G$ to infinity. We denote the $m$-distance clustered chromatic number of $G$ by $\chi_{\star, m}(G)$. If $\chi_{\star, m}(G)$ is finite, then by $\widehat{K}_{\star, m}(G)$ we denote the minimum constant $K_{\star}$ from the definition of the $m$-distance clustered $\chi_{\star, m}(G)$-coloring of~$G$.
\end{defi}

Obviously, the definition makes the most sense if $G$ is a graph with an infinite number of vertices since for graphs $G$ with a finite (but nonzero) number of vertices we have $\chi_{\star, m}\brac{G} = 1$ for every $m \in \mathbb{N}$. Let us stress that this notion does not require from a graph to be connected. Nevertheless, we shall apply it only to the connected graphs. The definition of clustered chromatic number appearing in the literature (\cite{clusterednumber}) coincides with our definition for $m=1$.
    
    \begin{obs}\label{clustobs}
        $\chi_{\star, m_1}(G) \le \chi_{\star, m_2}(G)$ for $m_1 \le m_2$.
    \end{obs}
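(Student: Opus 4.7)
The plan is to observe that any $m_2$-distance clustered coloring is automatically an $m_1$-distance clustered coloring when $m_1 \le m_2$, since the constraint on component separation only becomes harder as $m$ grows.

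More precisely, first I would dispose of the trivial case: if $\chi_{\star, m_2}(G) = \infty$, the inequality is immediate. So assume $\chi_{\star, m_2}(G) = n < \infty$ and pick an $m_2$-distance clustered $n$-coloring $f \colon V \to [n]$ witnessed by a constant $K_{\star}$. I would then check directly from the definition that $f$ is also an $m_1$-distance clustered $n$-coloring with the same constant $K_{\star}$: the component-size condition is identical, and for every color $i \in [n]$ and any two connected components $C, C'$ of the subgraph induced by $f^{-1}[\{i\}]$, the hypothesis gives $\operatorname{dist}(C, C') > m_2 \ge m_1$. Hence $\chi_{\star, m_1}(G) \le n = \chi_{\star, m_2}(G)$.

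There is no real obstacle here; the statement is a monotonicity remark that follows purely from unraveling the definition, which is why it is labeled an observation. The only thing worth being careful about is the convention $\chi_{\star, m}(G) = \infty$ when no such coloring exists, which is why I separate the two cases at the start.
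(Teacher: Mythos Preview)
Your argument is correct and is exactly the intended one: the paper states this as an observation without proof, and your unraveling of the definition (any $m_2$-distance clustered $n$-coloring is automatically an $m_1$-distance clustered $n$-coloring when $m_1 \le m_2$) is precisely the reasoning implicit in that label.
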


\begin{ex}
 ($1$): Let $V$ be an arbitrary infinite set and we fix $v_0 \in V$. We define a set $E = \left\{\left\{v, v_0\right\}\colon\; v \in V \setminus \left\{v_0\right\}\right\}$ and $G$ be a graph with the set $V$ of vertices and the set $E$ of edges. We claim that $\chi_{\star, 1}\brac{G} = 2$ and $\chi_{\star, m}\brac{G} = \infty$ for $m > 1$.
 
 Obviously, $\chi_{\star, 1}\brac{G} > 1$ since $V$ is infinite and $G$ is connected. To show that $\chi_{\star, 1}\brac{G} \le 2$ it is enough to define a $1$-distance clustered $2$-coloring $f \colon V \to [2]$ of $G$ as follows: $f^{-1}\bracl{\left\{1\right\}} = V \setminus \left\{v_0\right\}$ and $f^{-1}\bracl{\left\{2\right\}} = \left\{v_0\right\}$. Let us fix $m>1$. Since $\chi_{\star, m}\brac{G} > 1$ and the distance between any two vertices is less than or equal to $2$, then $\chi_{\star, m}\brac{G} = \infty$.
 \vspace{10pt}
 
 ($2$): The following example shows that the difference between $\chi_{\star, 1}(G)$ and $\chi_{\star, 2}(G)$ can be arbitrarily big even if $\chi_{\star, 2}(G)$ is finite i.e. for every $k\ge 2$ we can construct a graph $G_k$ such that $\chi_{\star, 1}(G_k) = 2$ and $\chi_{\star, 2}(G_k) = k+1$.
 
 Let $k \ge 2$, $V_0 = \left\{0\right\}$ and $V_n = [k]^n$ for all $n \in \mathbb{N}$, and $V = \bigcup_{n=0}^\infty V_n$. Let $\pi\colon V \to V \cup \left\{-1\right\}$ be defined as follows: $\pi(0) = -1$, $\pi(v) = 0$ if $v \in V_1$ and $\pi(v) = (v_1, \ldots, v_{n-1})$ if $v = (v_1, \ldots, v_n)$ for some $n \ge 2$. Moreover for $v \in V$ we put $N_{\star}(v) := \pi^{-1}\bracl{\left\{v\right\}}$. Let $E$ be the set of all subsets $\left\{v, v'\right\} \subset V$ such that $\pi(v) = v'$ or $\pi(v') = v$. Finally, let $G_k$ be the graph with the set $V$ of vertices and the set $E$ of edges (see Figure \ref{toclusteredexample1} for the case $k=2$).

   \begin{figure}[h!]
\centering
    \includegraphics[width=0.75\linewidth]{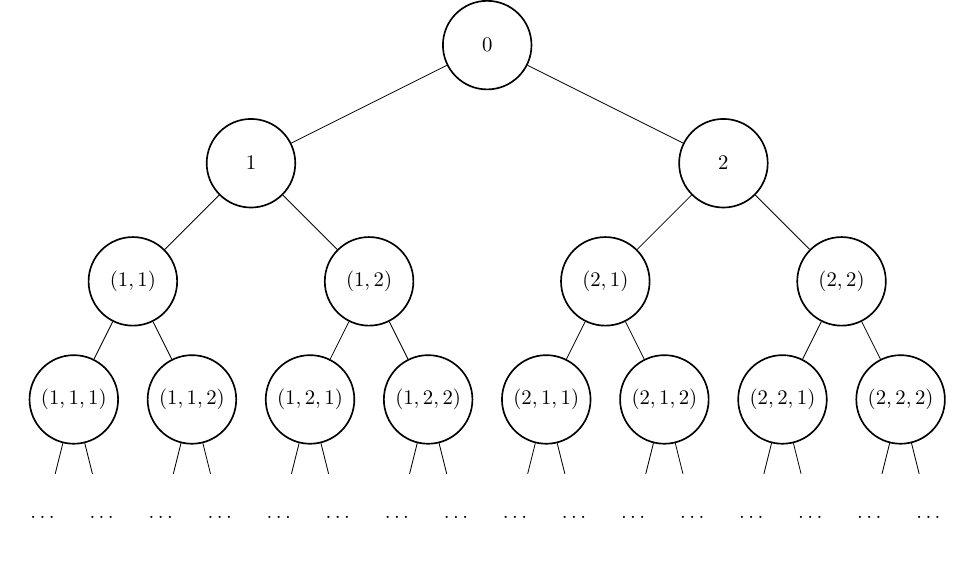}
     \caption{Graph $G_2$.}
     \label{toclusteredexample1}
\end{figure}
 
 Obviously, $\chi_{\star, 1}\brac{G_k} > 1$ since $V$ is infinite and $G_k$ is connected. To show that $\chi_{\star, 1}\brac{G_k} \le 2$ we define a $1$-distance clustered $2$-coloring $f \colon V \to [2]$ of $G_k$ as follows: $f^{-1}\bracl{\left\{1\right\}} = \bigcup_{k=0}^\infty V_{2k}$ and $f^{-1}\bracl{\left\{2\right\}} = \bigcup_{k=0}^\infty V_{2k + 1}$. Then, for both $i \in [2]$ all connected components of the subgraph induced by the set $f^{-1}\bracl{\left\{i\right\}}$ are singletons. 
 
  Let us suppose that $\chi_{\star, 2}(G_k) \le k$ and let $f \colon V \to [k]$ be a $2$-distance clustered $k$-coloring of $G_k$. Let $i \in [k]$ be such that $f^{-1}\bracl{\left\{i\right\}} \neq \emptyset$ and let $S$ be any connected component of the subgraph induced by the~set $f^{-1}\bracl{\left\{i\right\}}$. Since $S$ is finite, there is maximum $n \ge 0$ such that $S \cap V_n \neq \emptyset$. Let $v \in S \cap V_n$. Obviously $\abs{N_{\star}(v)} = k$ and from the maximality of $n$ we have $f^{-1}\bracl{\left\{i\right\}} \cap N_{\star}(v) = \emptyset$. Hence $f\bracl{N_{\star}(v)} \subset [k] \setminus \left\{i\right\}$, but $\abs{N_{\star}(v)} = k$ so there exist two vertices $v_1, v_2 \in N_{\star}(v)$ and $i_0\in [k] \setminus \left\{i\right\}$ such that $f(v_1) = f(v_2) = i_0$. These vertices belong to different connected components of the subgraph induced by $f^{-1}\bracl{\left\{i_0\right\}}$ and the distance between them equals $2$ so it contradicts the definition of $f$.

  To show that $\chi_{\star, 2}\brac{G_k} \le k+1$, we define $f \colon V \to [k+1]$ inductively. Clearly, the graph $G_k$ can be viewed as a tree. Our goal is to assign to the children vertices of the root vertex the same color as that root vertex, and then assign different colors to their children vertices. We then continue this process by treating the grandchildren vertices of the root vertex as new root vertices. It is helpful to refer to Figure~\ref{toclusteredexample2} for a clearer understanding of the construction. We put $f(v) := 1$ for all $v \in V_0 \cup V_1$. For $v \in V \setminus V_0$ such that $f(v)$ is already defined we put $f(v') := f(v)$ for all $v' \in N_{\star}(v)$ if $f(v) \neq f \circ \pi(v)$, and we assign each color except color $f(v)$ to exactly one $v' \in N_{\star}(v)$ in any order if $f(v) = f \circ \pi(v)$. One can easily convince oneself that $f$ is a $2$-distance clustered $(k+1)$-coloring of $G_k$.
  \begin{figure}[h!]
\centering
    \includegraphics[width=0.8\linewidth]{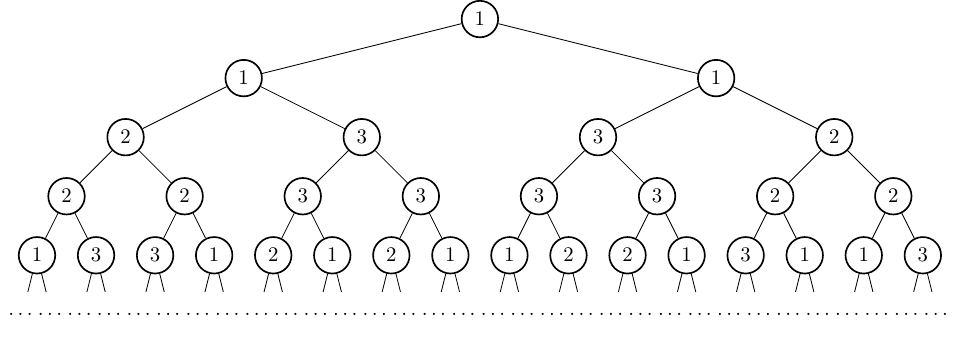}
     \caption{A $2$-distance clustered $3$-coloring of $G_2$.}
     \label{toclusteredexample2}
\end{figure}
 
 ($3$): Below, in Proposition \ref{graph} in particular we show that $\chi_{\star, m}(G)$ can reach the same fixed and finite value for all $m \in \mathbb{N}$.
\end{ex}

Let $n$ be a natural number and $G_n^\infty$ be the graph with the set of vertices $V = \mathbb{Z}^n$ and the set of edges $E_\infty = \left\{\left\{x, y\right\} \subset V \colon\; \norm{x - y}_\infty = 1\right\}$. The following Proposition \ref{graph} plays a key role in Section \ref{sectionmaincombi}.

\begin{prop}\label{graph}
Let $n, m \in \mathbb{N}$. Then $\chi_{\star, m}(G_n^\infty) = n+1$ and $\widehat{K}_{\star, m}\brac{G_n^\infty} \le n! \,m^n$.
\end{prop}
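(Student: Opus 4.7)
My plan has three parts: the lower bound $\chi_{\star,m}(G_n^\infty) \ge n+1$, a reduction of the upper bound to the case $m = 1$, and an inductive construction of a $1$-distance clustered $(n+1)$-coloring of $G_n^\infty$ with cluster size $\le n!$.

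\emph{Lower bound.} By Observation~\ref{clustobs} I may assume $m=1$. Suppose toward a contradiction that $f\colon\mathbb{Z}^n\to[n]$ is a $1$-distance clustered $n$-coloring of $G_n^\infty$ with cluster-size bound $K_\star$. Pick $k > K_\star$ and push $f|_{[k]^n}$ through the bijection $j_{n,k}$ to obtain a coloring $F\colon\mathcal{K}_k^n\to[n]$. Theorem~\ref{chessboardthm} yields $p\in[n]$ and a subfamily $\mathcal{S}\subset F^{-1}\bracl{\{p\}}$ whose union connects some opposite faces of $I^n$. Observation~\ref{obsforj}(4) gives $\abs{\mathcal{S}}\ge k$, and Observation~\ref{obsforj}(3) shows $j_{n,k}^{-1}\bracl{\mathcal{S}}\subset[k]^n$ is $1$-connected in $G_n^\infty$. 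Being monochromatic and $1$-connected (each step of any $1$-path has $\ell^\infty$-jump $1$), this set sits in a single $1$-distance cluster of $f$, producing a cluster of size $\ge k > K_\star$ --- contradiction.

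\emph{Reduction to $m=1$.} Given any $1$-distance clustered $(n+1)$-coloring $c\colon\mathbb{Z}^n\to[n+1]$ of $G_n^\infty$ with cluster bound $K$, the lifted coloring $f(x):=c\bigl(\lfloor x_1/m\rfloor,\ldots,\lfloor x_n/m\rfloor\bigr)$ is an $m$-distance clustered $(n+1)$-coloring with cluster bound $K\cdot m^n$: each $f$-cluster equals the union of the $m^n$-point bricks indexed by one $c$-cluster, and two bricks whose indices differ by at least $2$ in some coordinate lie at $\ell^\infty$-distance $\ge m+1>m$. It therefore suffices to produce $c_n$ with cluster bound $n!$.

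\emph{Constructing $c_n$.} I proceed by induction on $n$. The base $c_1(x):=1+(x\bmod 2)$ gives singleton clusters at separation $2$. For the inductive step, given $c_{n-1}$ with cluster bound $(n-1)!$, I build $c_n$ by partitioning the $x_n$-axis into $n$-blocks and, in the $t$-th block, using the recolored copy $\pi_t\circ c_{n-1}$ for a carefully chosen injection $\pi_t\colon[n]\hookrightarrow[n+1]$; the extra colour $n+1$ (absent from the $(n-1)$-dimensional palette) acts as the "buffer" preventing monochromatic clusters from merging across block boundaries. Each resulting $c_n$-cluster is then a vertical stack of $n$ copies of a $c_{n-1}$-cluster, of size at most $n\cdot(n-1)!=n!$; separation $\ge 2$ is inherited from the $(n-1)$-dimensional induction and from the block structure in $x_n$. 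The principal obstacle is producing the family $\{\pi_t\}$: the non-merging condition must be verified in every king-direction between adjacent blocks (including the diagonal ones), and simple alternation fails because it would force disjoint images of adjacent injections --- incompatible with a colour budget of only $n+1$. A cyclic/staircase selection of the $\pi_t$'s, using the extra colour $n+1$ precisely to plug the diagonal gaps, is what makes the budget tight.
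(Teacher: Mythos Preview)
Your lower bound and your reduction to $m=1$ are both correct; the reduction is a clean simplification that the paper does not make (the paper instead writes down an explicit $m$-dependent lattice tiling with fundamental brick $\prod_{i=1}^n[im]$). The problem is entirely in your inductive construction of $c_n$.

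As you describe it, each block of $n$ consecutive layers is coloured by a \emph{single} injection $\pi_t\colon[n]\hookrightarrow[n+1]$ applied to $c_{n-1}$. This already fails for $n=2$. Take $c_1(x)=1+(x\bmod 2)$ and any two injections $\pi_t,\pi_{t+1}\colon[2]\hookrightarrow[3]$ for consecutive $2$-blocks. Since $|\operatorname{im}\pi_t|+|\operatorname{im}\pi_{t+1}|=4>3$, there is some colour $j$ in both images; say $\pi_t^{-1}(j)=i$ and $\pi_{t+1}^{-1}(j)=i'$. If $i=i'$, the $j$-coloured points on the top layer of block $t$ and the bottom layer of block $t+1$ have identical $x_1$-parity and are vertically adjacent; if $i\neq i'$, they have opposite parities and are diagonally adjacent. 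Either way they merge in the king graph $G_2^\infty$, so no choice of the $\pi_t$'s can prevent clusters from crossing every block boundary. You acknowledge that ``producing the family $\{\pi_t\}$'' is the principal obstacle, but you do not actually produce it, and the scheme you outline (one injection per $n$-block) cannot be made to work.

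The paper's construction, rephrased for $m=1$, does \emph{not} keep the palette constant on blocks: it tiles $\mathbb{Z}^n$ by translates of the brick $[1]\times[2]\times\cdots\times[n]$ along the lattice generated by an explicit upper-triangular matrix $A$ with diagonal $(2,3,\dots,n+1)$, and colours each translate by $k\in\{0,\dots,n\}$ according to which coset $\mathcal{V}+A\mathbb{Z}^n+k\mathbbm{1}$ it belongs to. In the $n=2$ picture this means the palette changes \emph{every} layer with period $n(n+1)=6$, and the vertical $2$-dominoes of different colours are staggered (colour~$0$ occupies layers $1,2$; colour~$1$ layers $2,3$; colour~$2$ layers $3,4$; and so on). If you want to salvage an inductive approach, you must allow the injection to vary layer by layer with period $n(n+1)$ rather than block by block, and you must verify explicitly that for each colour the occupied layers form a window of length exactly $n$ within each period, with the windows for a fixed colour separated by $\ge 2$ layers and the horizontal patterns in consecutive occupied layers identical (not merely king-close). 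As it stands, the upper bound is not proved.
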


\begin{proof}
Firstly, we shall prove that $\chi_{\star, m}(G_n^\infty) > n$. Conversely, let us suppose that $\chi_{\star, m}(G_n^\infty) \le n$. Then $\chi_{\star, 1}(G_n^\infty) \le n$ by Observation \ref{clustobs} so there exists a constant $K_{\star} > 0$ and a function $f \colon \mathbb{Z}^n \to [n]$ such that for every $i \in [n]$ all $1$-connected components of $f^{-1}\bracl{\left\{i\right\}}$ have cardinalities at most $K_{\star}$. Let $M = K_{\star} + 1$ and $F = f \circ j_{n, M}^{-1} \colon \mathcal{K}_M^n \to [n]$. Then, by virtue of Theorem \ref{chessboardthm} there exist $p \in [n]$ and $\mathcal{S} \subset F^{-1}\bracl{\left\{p\right\}}$ such that $\bigcup \mathcal{S}$ connects some opposite faces of $I^n$. From Observation \ref{obsforj} $(4)$ we have $\abs{\mathcal{S}} \ge M$ so $\abs{j_{n, M}^{-1}\bracl{\mathcal{S}}} = \abs{\mathcal{S}} \ge M > K_{\star}$. Moreover $j_{n, M}^{-1}\bracl{\mathcal{S}}$ is $1$-connected from Observation \ref{obsforj} $(3)$. Finally, we have $f \circ j_{n, M}^{-1}\bracl{\mathcal{S}} = F\bracl{\mathcal{S}} = \left\{p\right\}$ so $j_{n, M}^{-1}\bracl{\mathcal{S}} \subset f^{-1}\bracl{\left\{p\right\}}$ and it contradicts the fact that all $1$-connected components of $f^{-1}\bracl{\left\{p\right\}}$ have cardinalities at most $K_{\star}$.

    Now, we will show that $\chi_{\star, m}(G_n^\infty) \le n+1$. Let $\mathcal{V} = \prod_{i=1}^n [im]$. Then $\abs{\mathcal{V}} = n! \,m^n$. Let us define a~square matrix $A = \brac{a_{ij}}_{i, j = 1}^{n}$ as follows:
    \begin{align*}
a_{ij} = \begin{cases}
0 & \text{if } i>j; \\
(i+1)m & \text{if } i=j; \\
m & \text{if } i<j.
\end{cases}
\end{align*}
Let $V^u_k = \mathcal{V} + Au + k\mathbbm{1}_m$ for $u \in \mathbb{Z}^n, k = 0, \ldots, n$ where $\mathbbm{1}_m = \brac{m, m, \ldots, m}^T \in \mathbb{Z}^n$ and let $V_k = \bigcup_{u \in \mathbb{Z}^n} V_k^u$ for $0 \le k \le n$. We have 
\begin{align*}
(Au)_i = \sum_{j=1}^n a_{ij}u_j = (i+1)mu_i + m\sum_{j=i+1}^n u_j    
\end{align*}
for every $i \in [n]$. We will show that for every $t \in \mathbb{Z}^n$ there exists exactly one $0 \le k \le n$ such that $t \in V_k$.

Firstly, we shall show that the family $\left\{V_k\right\}_{k=0}^n$ consists of pairwise disjoint sets. For this purpose, let us suppose that $V_{k_1} \cap V_{k_2} \neq \emptyset$ for some $k_1$ and $k_2$. Then $v^1 + Au^1 + k_1\mathbbm{1}_m = v^2 + Au^2 + k_2\mathbbm{1}_m$ for some $v^1, v^2 \in \mathcal{V}$ and $u^1, u^2 \in \mathbb{Z}^n$. Let us denote $v = v^1 - v^2, u = u^1 - u^2$ and $k = k_1 - k_2$. Then $v + Au + k\mathbbm{1}_m = 0$ so
\begin{align}\label{prop1}
    v_i + (i+1)mu_i + m\sum_{j=i+1}^n u_j + mk = 0
\end{align}
for every $i \in [n]$ and we obtain
\begin{align}\label{prop2}
    v_i - v_{i-1} + im(u_i - u_{i-1}) = 0
\end{align}
for every $2 \le i \le n$.
Let us suppose that $v \neq 0$. Then we can take $i_0 = \min \left\{i \colon v_i \neq 0\right\}$. Since $v^1_1, v^2_1 \in [1, m]$, we have $\abs{v_1} < m$ so $v_1 = 0$ since $m \,|\, v_1$ from $(\ref{prop1})$. Thus $i_0 > 1$ and $v_{i_0} - v_{i_0 - 1} + i_0m(u_{i_0} - u_{i_0 -1}) = 0$ by $(\ref{prop2})$. We have $v_{i_0 - 1} = 0$ and $i_0m \,|\, \brac{v_{i_0} - v_{i_0 - 1}}$ so $i_0m \,|\, v_{i_0}$. On the other hand, $\abs{v_{i_0}} < i_0 m$ so $v_{i_0} = 0$ and it contradicts the definition of $i_0$. Hence $v=0$ and from $(\ref{prop1})$ for $i=n$ we obtain $(n+1)mu_n + mk = 0$ so $(n+1)\,|\,k$. But $\abs{k} \le n$ so $k=0$ and finally $k_1 = k_2$.

We shall show that $\mathbb{Z}^n = \bigcup_{k=0}^n V_k$. Let us fix $t \in \mathbb{Z}^n$. We define 
\begin{align*}
    &v_1 = (t_1 - 1) \pmod{m} + 1, \\&
    v_i = \brac{\brac{t_i - t_{i-1}} \pmod{im} + v_{i-1} -1} \pmod{im} + 1
\end{align*}
 for all $i = 2, \ldots, n$. Then $v_i \in [im]$ for all $i \in [n]$ so $v = (v_1, v_2, \ldots, v_n) \in \mathcal{V}$. Moreover from the definition of $v_i$ we obtain 
 \begin{align}\label{prop3}
 im\,|\, \brac{t_i - t_{i-1} - \brac{v_i - v_{i-1}}}
 \end{align}
 for all $2 \le i \le n$ and $m\,|\, (t_1 - v_1)$. From these properties we can easily prove inductively that $m\,|\,\brac{t_i - v_i}$ for all $i \in [n]$. In particular $m\,|\,\brac{t_n - v_n}$ so we can define 
 \begin{align*}
     k = \frac{1}{m} \brac{t_n - v_n} \pmod{(n+1)} \in \mathbb{Z}.
 \end{align*}
 Then $0 \le k \le n$ and
\begin{align*}
\brac{n+1}\,\Big|\, \brac{\frac{1}{m} \brac{t_n - v_n} - k}.
\end{align*}
Therefore, from $(\ref{prop3})$ we can define 
 \begin{align*}
     &u_n = \frac{1}{n+1} \brac{\frac{1}{m} \brac{t_n - v_n} - k} \in \mathbb{Z}, \\&
     u_{i-1} = u_i - \frac{1}{im} \brac{t_i - t_{i-1} - \brac{v_i - v_{i-1}}} \in \mathbb{Z}
 \end{align*}
  for all $2\le i \le n$ so $u = (u_1, u_2, \ldots, u_n) \in \mathbb{Z}^n$. We have 
  \begin{align*}
      \brac{v + Au + k \mathbbm{1}_m}_n = v_n + m(n+1)u_n + mk = t_n
  \end{align*}
  and we can easily prove inductively that $\brac{v + Au + k \mathbbm{1}_m}_i = t_i$ for all $i \in [n]$ so $v + Au + k \mathbbm{1}_m = t$ and thus $t \in V_k$.

Since the family $\left\{V_k\right\}_{k=0}^n$ consists of pairwise disjoint sets and $\mathbb{Z}^n = \bigcup_{k=0}^n V_k$, then for every $t \in \mathbb{Z}^n$ there exists exactly one $0 \le k \le n$ such that $t \in V_k$. We define a function $f \colon \mathbb{Z}^n \to [n+1]$ as follows: $f(t) = k+1$ where $0 \le k \le n$ is the only one $k$ such that $t \in V_k$. We claim that $f$ is an $m$-distance clustered $(n+1)$-coloring of $G_n^\infty$. 

Indeed, let us fix $i \in [n+1]$ and notice that every connected component of the subgraph induced by the set $f^{-1}\bracl{\left\{i\right\}}$ coincides with a $1$-connected component of $f^{-1}\bracl{\left\{i\right\}}$. We have $f^{-1}\bracl{\left\{i\right\}} = V_{i-1} = \bigcup_{u \in \mathbb{Z}^n} V_{i-1}^u$. Set $\mathcal{V}$ is $1$-connected and $\abs{\mathcal{V}} = n!\,m^n$ so all $V_{i-1}^u$ are $1$-connected and $\abs{V_{i-1}^u} = \abs{\mathcal{V}} = n!\,m^n$ since they are some translations of $\mathcal{V}$. To complete the proof it is enough to show that $\text{dist} \brac{V_{i-1}^{u}, V_{i-1}^{u'}} > m$ for every distinct points $u, u' \in \mathbb{Z}^n$ since in that way we indicate that every $1$-connected component of $f^{-1}\bracl{\left\{i\right\}}$ coincides with $V_{i-1}^u$ for some $u \in \mathbb{Z}^n$ and the distance between any two of these components is bigger than $m$.

Let $u, u' \in \mathbb{Z}^n$ be such that $u \neq u'$ and let us define $\widehat{u} = u - u'$. Then we can take $i_0 = \max \left\{i \colon\, \widehat{u}_i \neq 0\right\}$. Let $v^u \in V_{i-1}^u$ and $v^{u'} \in V_{i-1}^{u'}$ be arbitrary points. Then $v^u = v + Au + (i-1)\mathbbm{1}_m$ and $v^{u'} = v' + Au' + (i-1)\mathbbm{1}_m$ for some $v, v' \in \mathcal{V}$. Finally, we estimate
\begin{align*}
    \norm{v^u - v^{u'}}_\infty &= \norm{v - v' + A\widehat{u}}_\infty \ge \abs{\brac{v - v' + A\widehat{u}}_{i_0}} \ge \abs{\brac{A\widehat{u}}_{i_0}} - \abs{\brac{v - v'}_{i_0}} \\ & = \abs{m(i_0+1)\widehat{u}_{i_0} + m\sum_{j=i_0+1}^n \widehat{u}_j} - \abs{v_{i_0} - v'_{i_0}} = \abs{m(i_0+1)\widehat{u}_{i_0}} - \abs{v_{i_0} - v'_{i_0}} \\ & \ge m(i_0 +1)\abs{\widehat{u}_{i_0}} - (mi_0 - 1) \ge m(i_0+1) - (mi_0-1) = m+1 > m.
\end{align*}
Hence $\text{dist} \brac{V_{i-1}^{u}, V_{i-1}^{u'}} > m$ and it completes the proof that $\chi_{\star, m}(G_n^\infty) \le n+1$ and $\widehat{K}_{\star, m}\brac{G_n^\infty} \le n!\,m^n$.
\end{proof}

\section{The main combinatorial result}\label{sectionmaincombi}

In this section, we present a proof of the main combinatorial result, Theorem \ref{maincombi}, stated in the Introduction. To this end, we combine Proposition \ref{graph} with Theorem \ref{chessboardthm}.

\begin{twmainA}
Let $n \in \mathbb{N}, m \in \mathbb{Z}_+$ be such that $0 \le m \le n-1$. Then, there exists a constant $C_{n, m}>0$ such that the following property holds:
\begin{itemize}
    \item[] Let $k \in \mathbb{N}$ and let $F \colon \mathcal{K}_k^n \to \mathbb{Z}^{n-1}$ be a function such that $\norm{F(K_1) - F(K_2)}_\infty \le 1$ if $\dim\brac{K_1 \cap K_2} \ge m$. Then there exist a $1$-connected subset $P \subset \mathbb{Z}^{n-1}$ with $\abs{P} \le C_{n, m}$ and a~subset $\mathcal{S} \subset F^{-1}\bracl{P}$ such that $\bigcup \mathcal{S}$ connects some opposite faces of $I^n$.
\end{itemize}
\end{twmainA}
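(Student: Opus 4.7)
My plan is to reduce Theorem~A to the $n$-dimensional Steinhaus Chessboard Theorem (Theorem~\ref{chessboardthm}) by first coarsening the codomain $\mathbb{Z}^{n-1}$ via a clustered coloring furnished by Proposition~\ref{graph}. The starting observation is that Observation~\ref{obsforj}(6) applied to the hypothesis on $F$ gives $\norm{F(K_1) - F(K_2)}_\infty \le m+1$ whenever $K_1 \cap K_2 \neq \emptyset$; combined with Observation~\ref{obsforj}(1), this forces that whenever $\mathcal{S} \subset \mathcal{K}_k^n$ has $\bigcup \mathcal{S}$ connected, the image $F\bracl{\mathcal{S}}$ is $(m+1)$-connected in $\mathbb{Z}^{n-1}$ with respect to the $l^\infty$ metric.

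Assuming $n \ge 2$ (the case $n=1$ forces $\mathbb{Z}^{n-1}$ to be a singleton and is trivial), I would invoke Proposition~\ref{graph} on the graph $G_{n-1}^\infty$ with separation parameter $m+1$. This produces a function $c \colon \mathbb{Z}^{n-1} \to [n]$ such that for every $i \in [n]$ each $1$-connected component of $c^{-1}\bracl{\left\{i\right\}}$ has at most $(n-1)!\,(m+1)^{n-1}$ elements, and any two distinct such components are at $l^\infty$-distance strictly greater than $m+1$. Next, I would apply Theorem~\ref{chessboardthm} to the composition $c \circ F \colon \mathcal{K}_k^n \to [n]$, obtaining some $p \in [n]$ and a subfamily $\mathcal{S} \subset (c \circ F)^{-1}\bracl{\left\{p\right\}}$ with $\bigcup \mathcal{S}$ connecting some opposite faces of $I^n$.

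The conclusion then follows immediately: since $F\bracl{\mathcal{S}} \subset c^{-1}\bracl{\left\{p\right\}}$ is $(m+1)$-connected by the first paragraph, while any two distinct $1$-connected components of $c^{-1}\bracl{\left\{p\right\}}$ are separated by $l^\infty$-distance exceeding $m+1$, the whole image $F\bracl{\mathcal{S}}$ must sit inside a single such component $P$. By construction $P$ is $1$-connected with $\abs{P} \le (n-1)!\,(m+1)^{n-1}$, and clearly $\mathcal{S} \subset F^{-1}\bracl{P}$, which proves the theorem with the admissible constant $C_{n,m} = (n-1)!\,(m+1)^{n-1}$.

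The only conceptually delicate point is correctly matching parameters: one must feed the $(m+1)$-distance (rather than $m$-distance) version of the clustered chromatic number into Proposition~\ref{graph}, precisely because Observation~\ref{obsforj}(6) only controls $F$ across intersecting cubes by $m+1$ and not by $m$. Once this alignment is in place the argument is essentially formal, and I do not expect this strategy to deliver sharp values of $C_{n,m}$---that optimization is relegated to Section~\ref{minimalconst}.
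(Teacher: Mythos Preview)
Your proposal is correct and follows essentially the same approach as the paper: both compose $F$ with an $(m+1)$-distance clustered $n$-coloring of $\mathbb{Z}^{n-1}$ coming from Proposition~\ref{graph}, apply the Steinhaus Chessboard Theorem to the composition, and then use Observation~\ref{obsforj}(1),(6) together with the separation of color classes to confine $F\bracl{\mathcal{S}}$ to a single $1$-connected component. The only cosmetic difference is that you state the explicit bound $(n-1)!\,(m+1)^{n-1}$ directly, whereas the paper records it as $\widehat{K}_{\star, m+1}\brac{G_{n-1}^\infty}$ and notes the numeric bound separately in Corollary~\ref{ineqconst}.
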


\begin{proof}
    Let $n, m \in \mathbb{N}$ be such that $0 \le m \le n-1$. The case $n=1$ is trivial so we can assume that $n \ge 2$. Let $k \in \mathbb{N}$ and $F \colon \mathcal{K}_k^n \to \mathbb{Z}^{n-1}$ be a function such that $\norm{F(K_1) - F(K_2)}_\infty \le 1$ if $\dim\brac{K_1 \cap K_2} \ge m$.
    
    By virtue of Proposition \ref{graph} we have $\chi_{\star, m+1}\brac{G_{n-1}^\infty} = n$. Hence there exists a function \hbox{$f \colon \mathbb{Z}^{n-1} \to [n]$} such that for every $i \in [n]$ all $1$-connected components of the set $f^{-1}\bracl{\left\{i\right\}}$ have cardinalities at most $\widehat{K}_{\star, m+1}\brac{G_{n-1}^\infty}$ and the distance between any two of these $1$-connected components is bigger than $m+1$. 
    
    Let $\widehat{F} = f \circ F \colon \mathcal{K}_k^n \to [n]$. By virtue of Theorem \ref{chessboardthm} there exist $p \in [n]$ and $\mathcal{S} \subset \widehat{F}^{-1}\bracl{\left\{p\right\}}$ such that $\bigcup \mathcal{S}$ connects some opposite faces of $I^n$. Then $F\bracl{\mathcal{S}} \subset F  \bracl{\widehat{F}^{-1}\bracl{\left\{p\right\}}} = F \bracl{F^{-1} \bracl{f^{-1}\bracl{\left\{p\right\}}}} \subset f^{-1}\bracl{\left\{p\right\}}$. We claim that $F\bracl{\mathcal{S}}$ is contained in some $1$-connected component of $f^{-1}\bracl{\left\{p\right\}}$.

    Indeed, let $K, K' \in \mathcal{S}$. Since $\bigcup \mathcal{S}$ is connected, then by Observation \ref{obsforj} $(1)$ there exists a sequence $c \colon [L] \to \mathcal{S}$ for some $L>0$ such that $c_1 = K, c_L = K'$ and $c_i \cap c_{i+1} \neq \emptyset$ for $i \in [L-1]$. Since $\norm{F(K_1) - F(K_2)}_\infty \le 1$ if $\dim \brac{K_1 \cap K_2} \ge m$, then $\norm{F(K_1) - F(K_2)}_\infty \le m+1$ if $\dim \brac{K_1 \cap K_2} \ge 0$ by Observation \ref{obsforj} $(6)$. Thus $\norm{F(c_i) - F(c_{i+1})}_\infty \le m+1$ for $i \in [L-1]$. It means that all points $F(c_i)$ belong to the same $1$-connected component of $f^{-1}\bracl{\left\{p\right\}}$ since the distance between any two $1$-connected components of $f^{-1}\bracl{\left\{p\right\}}$ is bigger than $m+1$. In particular, $F(K)$ and $F(K')$ belong to the same $1$-connected component of $f^{-1}\bracl{\left\{p\right\}}$.

    Let $P$ be the $1$-connected component of $f^{-1}\bracl{\left\{p\right\}}$ such that $F\bracl{\mathcal{S}} \subset P$. Then $\mathcal{S} \subset F^{-1}\bracl{P}$ and $\abs{P} \le \widehat{K}_{\star, m+1}\brac{G_{n-1}^\infty}$ so it completes the proof.
\end{proof}

For $0 \le m \le n-1$ by $\widehat{C}_{n, m}$ we denote the minimum constant $C_{n, m}>0$ for which the property from Theorem \ref{maincombi} holds. Obviously, $\widehat{C}_{n, m_1} \le \widehat{C}_{n, m_2}$ for $m_1 \le m_2$. We shall examine some minimal constants in the cases $n = 2$ and $n = 3$ in Section \ref{minimalconst}. The degenerated case $n=1$ is trivial and $\widehat{C}_{1, 0} = 1$. Moreover we will show in Proposition \ref{optimalconstprop} that for $n \ge 3$ and every $0 \le m \le n-1$ we have $\widehat{C}_{n,m} >1$.

\begin{cor}\label{ineqconst}
    For all $0 \le m \le n-1$ we have $\widehat{C}_{n, m} \le \widehat{K}_{\star, m+1}\brac{G_{n-1}^\infty} \le (n-1)!\, (m+1)^{n-1}$. \qed
\end{cor}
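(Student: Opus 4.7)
The statement is a direct combination of the quantitative content of the proof of Theorem \ref{maincombi} and the bound from Proposition \ref{graph}, so the plan is very short and there is no serious obstacle; the work is essentially bookkeeping.

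First I would establish the left inequality $\widehat{C}_{n,m} \le \widehat{K}_{\star, m+1}(G_{n-1}^\infty)$ by inspecting the proof of Theorem \ref{maincombi}. There, for any admissible $F$, a function $f \colon \mathbb{Z}^{n-1} \to [n]$ realizing the value $\chi_{\star, m+1}(G_{n-1}^\infty) = n$ (given by Proposition \ref{graph}) is chosen, and the subset $P$ produced is a $1$-connected component of $f^{-1}[\{p\}]$ for some $p \in [n]$. By definition of $\widehat{K}_{\star, m+1}(G_{n-1}^\infty)$, every such component has cardinality at most $\widehat{K}_{\star, m+1}(G_{n-1}^\infty)$, so the constant $\widehat{K}_{\star, m+1}(G_{n-1}^\infty)$ witnesses the property of Theorem \ref{maincombi}. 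By minimality of $\widehat{C}_{n,m}$, the inequality follows.

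For the right inequality $\widehat{K}_{\star, m+1}(G_{n-1}^\infty) \le (n-1)!\, (m+1)^{n-1}$ I would simply invoke Proposition \ref{graph} with the substitution $n \mapsto n-1$ and $m \mapsto m+1$; the proposition yields exactly $\widehat{K}_{\star, m+1}(G_{n-1}^\infty) \le (n-1)!\, (m+1)^{n-1}$ (the edge case $n = 1$ being vacuous once one agrees that $\mathbb{Z}^0 = \{0\}$ is trivially coloured). Chaining the two inequalities closes the proof. The only minor care required is to check that the degenerate case $n=1$ is consistent with the statement, which is immediate since then the left-hand side equals $1$ and both upper bounds are also $1$.
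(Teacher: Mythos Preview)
Your proposal is correct and matches the paper's approach exactly: the corollary is stated with a \qed and no separate proof, since it follows immediately from the fact that the set $P$ produced in the proof of Theorem~\ref{maincombi} has cardinality at most $\widehat{K}_{\star, m+1}(G_{n-1}^\infty)$, together with the bound from Proposition~\ref{graph} applied with $n \mapsto n-1$ and $m \mapsto m+1$. Your handling of the degenerate case $n=1$ is also consistent with the paper's conventions.
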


\section{Minimum constants}\label{minimalconst}

In this section, we investigate the optimal constants appearing in Theorem \ref{maincombi}. More precisely, we treat the two-dimensional case by showing that $\widehat{C}_{2, 0} = \widehat{C}_{2, 1} = 1$ (Proposition \ref{optimalconst1}), we show that in the $n$-dimensional case, with $n\ge 3$, the optimal constants are strictly greater than $1$ (Proposition \ref{optimalconstprop}), and we establish that $\widehat{C}_{3, 0} = 2$ (Corollary \ref{optimalconst2}).

\begin{prop}\label{optimalconst1}
$\widehat{C}_{2, 0} = \widehat{C}_{2, 1} = 1$.
\end{prop}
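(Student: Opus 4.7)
By Corollary~\ref{ineqconst} with $n = 2$ and $m = 0$, one immediately has $\widehat{C}_{2, 0} \le 1$; combined with the trivial lower bound $\widehat{C}_{2, 0} \ge 1$ (the family $\mathcal{S}$ must be nonempty to span), this yields $\widehat{C}_{2, 0} = 1$. The monotonicity $\widehat{C}_{2, m_1} \le \widehat{C}_{2, m_2}$ for $m_1 \le m_2$ remarked after Corollary~\ref{ineqconst} then gives $\widehat{C}_{2, 1} \ge 1$, so the only remaining task is to prove the upper bound $\widehat{C}_{2, 1} \le 1$: for every $k \in \mathbb{N}$ and every $F \colon \mathcal{K}_k^2 \to \mathbb{Z}$ satisfying $\abs{F(K_1) - F(K_2)} \le 1$ whenever $\dim\brac{K_1 \cap K_2} \ge 1$, there exist $p \in \mathbb{Z}$ and $\mathcal{S} \subset F^{-1}\bracl{\left\{p\right\}}$ such that $\bigcup \mathcal{S}$ connects some opposite faces of $I^2$.

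My plan is a critical-level argument combining the $2$-dimensional Steinhaus Chessboard Theorem (Theorem~\ref{chessboardthm}) with planar topology. Define
\[
v^{\star} = \min\bigl\{v \in \mathbb{Z} \colon \text{some subfamily of } F^{-1}\bracl{(-\infty, v]} \text{ has its union connecting opposite faces of } I^2\bigr\},
\]
well-defined since the sublevel family is empty for $v < \min F$ and equals $\mathcal{K}_k^2$ for $v \ge \max F$, so the latter trivially spans. Let $\mathcal{C}$ be a spanning subfamily realising this minimum; without loss of generality $\bigcup \mathcal{C}$ connects top and bottom. By minimality of $v^{\star}$, no subfamily of $F^{-1}\bracl{(-\infty, v^{\star} - 1]}$ spans. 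Applying Theorem~\ref{chessboardthm} to the $2$-coloring $G(K) = 1$ if $F(K) \le v^{\star} - 1$ and $G(K) = 2$ if $F(K) \ge v^{\star}$, the failure of color $1$ to admit any spanning subfamily forces the color-$2$ family $F^{-1}\bracl{[v^{\star}, \infty)}$ to contain one, call it $\mathcal{D}$. Every cell in $\mathcal{C} \cap \mathcal{D}$ necessarily satisfies $F(K) = v^{\star}$, so $\bigcup(\mathcal{C} \cap \mathcal{D}) \subset \bigcup F^{-1}\bracl{\left\{v^{\star}\right\}}$.

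The main obstacle is to promote this set-theoretic intersection to a connected subfamily $\mathcal{S} \subset F^{-1}\bracl{\left\{v^{\star}\right\}}$ whose union still connects a pair of opposite faces of $I^2$. In the favorable subcase where $\bigcup \mathcal{D}$ connects the left and right faces, the top-bottom spanning continuum $\bigcup \mathcal{C}$ and the left-right spanning continuum $\bigcup \mathcal{D}$ must cross in $I^2$, and by choosing both $\mathcal{C}$ and $\mathcal{D}$ as inclusion-minimal spanning subfamilies one can invoke planar continuum-theoretic arguments of Janiszewski--Kuratowski type to extract from $\bigcup(\mathcal{C} \cap \mathcal{D})$ a connected subset connecting some opposite faces, giving the desired $\mathcal{S}$. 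The unfavorable subcase, in which $\bigcup \mathcal{D}$ also connects top and bottom, is the delicate point: the simple crossing argument fails, and one must combine the analysis above with its symmetric counterpart built from $w^{\star} = \max\bigl\{v \in \mathbb{Z} \colon \text{some subfamily of } F^{-1}\bracl{[v, \infty)} \text{ spans}\bigr\}$, together with an iterative refinement of the choice of spanning components, to reduce to the favorable configuration.
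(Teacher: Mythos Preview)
Your proposal has a genuine gap. In the favorable subcase you assert that from $\bigcup(\mathcal{C}\cap\mathcal{D})$ one can extract a connected subset spanning opposite faces, but this is false even when $\mathcal{C}$ and $\mathcal{D}$ are inclusion-minimal. Take $k=3$ and $F\brac{j_{2,3}(i_1,i_2)} = i_1+i_2-3$, which satisfies the edge-Lipschitz condition. One checks $v^\star=1$; the left column $\mathcal{C}=\{j_{2,3}(1,s)\colon s\in[3]\}$ (values $-1,0,1$) is an inclusion-minimal top--bottom spanner inside $F^{-1}\bracl{(-\infty,1]}$, and the top row $\mathcal{D}=\{j_{2,3}(s,3)\colon s\in[3]\}$ (values $1,2,3$) is an inclusion-minimal left--right spanner inside $F^{-1}\bracl{[1,\infty)}$. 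Yet $\mathcal{C}\cap\mathcal{D}=\{j_{2,3}(1,3)\}$ is a single square, which spans nothing. (Here $F^{-1}\bracl{\{1\}}$ is the anti-diagonal and \emph{does} span, but that set is not what your mechanism produces.) Crossing lemmas of Janiszewski type yield only $\bigcup\mathcal{C}\cap\bigcup\mathcal{D}\neq\emptyset$; they do not manufacture a spanning subcontinuum of the intersection. To rescue the idea you would at minimum have to work with the full level set $F^{-1}\bracl{\{v^\star\}}$ and use the edge-Lipschitz hypothesis in an essential way, which your sketch never does. The unfavorable subcase is not argued at all: ``iterative refinement'' and ``reduce to the favorable configuration'' name a wish, not a mechanism, and there is no indication why such a process terminates.

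For comparison, the paper's proof uses no critical levels. Arguing by contradiction, it picks among all monochromatic connected families $\mathcal{S}\subset F^{-1}\bracl{\{p\}}$ one, $\mathcal{S}_{\max}$, of maximal horizontal extent, takes a suitable connected component $S$ of $\interior{I^2}\setminus\bigcup\mathcal{S}_{\max}$, and studies the family $\mathcal{S}_2$ of squares meeting $S$ whose intersection with $\partial_{\interior{I^2}}S$ is one-dimensional. Planar topology (Proposition~\ref{toconstC21}) makes $\bigcup\mathcal{S}_2$ connected; the edge-Lipschitz condition forces $\mathcal{S}_2\subset F^{-1}\bracl{\{p-1\}}$ or $\mathcal{S}_2\subset F^{-1}\bracl{\{p+1\}}$; and a direct geometric argument shows $\mathcal{S}_2$ has strictly larger horizontal extent than $\mathcal{S}_{\max}$, the desired contradiction.
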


\begin{proof}
From Corollary \ref{ineqconst} we obtain $\widehat{C}_{2, 0} \le 1$ so we only have to show that $\widehat{C}_{2, 1} \le 1$.

Let $k \in \mathbb{N}$ and let $F \colon \mathcal{K}_k^2 \to \mathbb{Z}$ be a function such that $\norm{F(K_1) - F(K_2)}_\infty \le 1$ if $\dim\brac{K_1 \cap K_2} \ge 1$. Let $\mathcal{A}$ be the family of all maximal (under inclusion) sets $\mathcal{S} \subset F^{-1}\left[\left\{p\right\}\right]$ such that $p \in \mathbb{Z}$ and $\bigcup \mathcal{S}$ is connected. It is enough to prove that there exists $\mathcal{S} \in \mathcal{A}$ such that $\bigcup \mathcal{S}$ connects some opposite faces of $I^2$. Conversely, let us suppose that for every $\mathcal{S} \in \mathcal{A}$ the set $\bigcup \mathcal{S}$ does not connect $i$th opposite faces of $I^2$ for any $i \in [2]$.

For $i \in [2]$ let $\pi_i \colon \mathbb{R}^2 \to \mathbb{R}$ be the projection onto the $i$th coordinate. Let $\mathcal{S}_{\text{max}}$ be an arbitrary $\mathcal{S} \in \mathcal{A}$ which maximizes value $\max \,\pi_1\bracl{\bigcup \mathcal{S}} - \min \, \pi_1\bracl{\bigcup \mathcal{S}}$ over all $\mathcal{S} \in \mathcal{A}$ and let $p \in \mathbb{Z}$ be such that $\mathcal{S}_\text{max} \subset F^{-1}\bracl{\left\{p\right\}}$. For fixed $i \in [2]$ since $\bigcup \mathcal{S}_\text{max}$ does not connect $i$th opposite faces of $I^2$, it must be $\max \pi_i\bracl{\bigcup \mathcal{S_{\text{max}}}} < 1$ or $\min \pi_i\bracl{\bigcup \mathcal{S_{\text{max}}}} > 0$. Without loss of generality\footnote{There are four cases to consider, but they are all similar.} we can assume that $\min \pi_i\bracl{\bigcup \mathcal{S_{\text{max}}}} > 0$ for both $i \in [2]$. Let $x_0 = \brac{1/2, 1\,/\brac{2k}}$. Then $x_0 \in \interior{I^2} \setminus \bigcup \mathcal{S}_{\text{max}}$ since $\min \pi_2\bracl{\bigcup \mathcal{S_{\text{max}}}} > 0$. Let $S$ be the connected component of $\interior{I^2} \setminus \bigcup \mathcal{S}_{\text{max}}$ containing the point $x_0$. Finally, let 
\begin{align*}
&\mathcal{S}_0=\left\{K \in \mathcal{K}_k^2\colon\, K \cap S \neq \emptyset\right\},\\
&\mathcal{S}_1 = \left\{K \in \mathcal{S}_0\colon\, K \cap \partial_{\interior{I^2}} S \neq \emptyset\right\},\\
&\mathcal{S}_2 = \left\{K \in \mathcal{S}_0\colon\, \dim\brac{K \cap \partial_{\interior{I^2}} S} = 1 \right\}.
\end{align*}

\begin{figure}[h!]
\centering
     \includegraphics[width=0.86\linewidth]{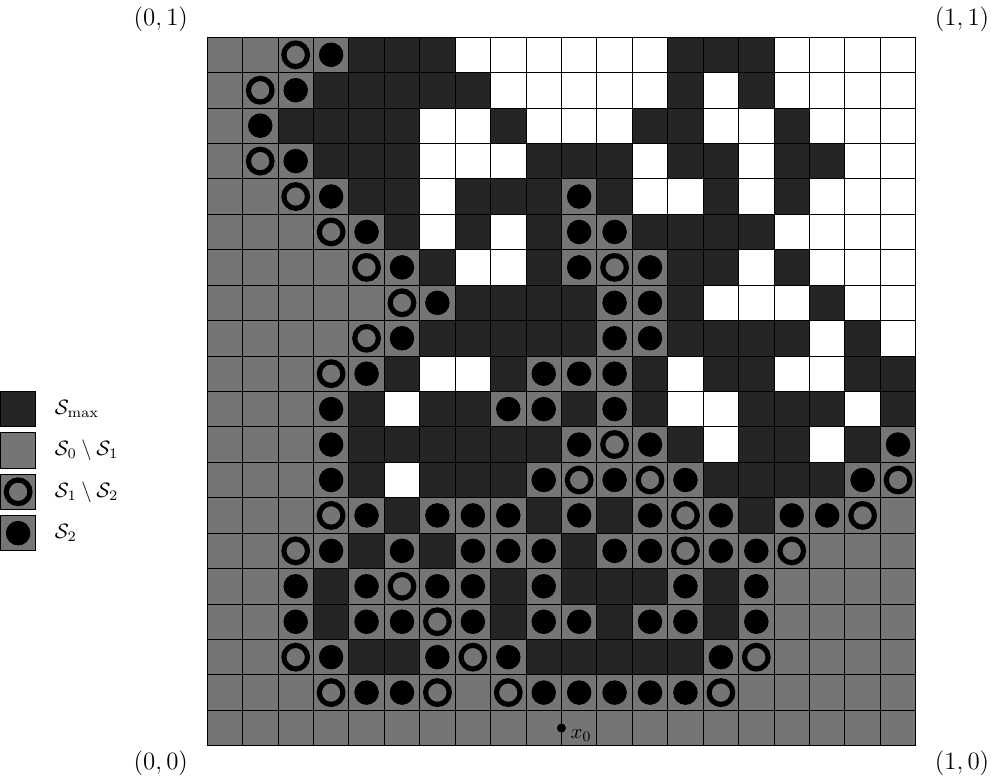}
     \caption{An illustration of a generic situation from the proof.}
     \label{figchess}
\end{figure}
\smallskip

We shall show the following properties\footnote{We provide rigorous arguments for all properties but once the drawing (e.g. Figure \ref{figchess}) is made properties $(\ref{dow1}) - (\ref{dow5})$ and $(\ref{dow8})$ should be visually intuitive.} of $\mathcal{S}_1$, $\mathcal{S}_2$ and $S$:
\begin{enumerate}
\item the set $\partial_{\interior{I^2}} S$ is connected and $\partial_{\interior{I^2}} S \subset \bigcup \mathcal{S}_{\text{max}}$; \label{dow1}
\item $\partial_{\interior{I^2}} S \subset \bigcup \mathcal{S}_2$; \label{dow3}
\item the set $\bigcup \mathcal{S}_2$ is connected; \label{dow4}
\item for every distinct $K, K' \in \mathcal{S}_1$ there exists a sequence $s \colon [N] \to \mathcal{S}_1$ for some $N>0$ such that $s_1 = K, s_N = K'$ and $\dim\brac{s_i \cap s_{i+1}} = 1$ for all $i \in [N-1]$; \label{dow5}
\item $\mathcal{S}_1 \subset F^{-1}\bracl{\left\{p-2, p-1\right\}}$ or $\mathcal{S}_1 \subset F^{-1}\bracl{\left\{p+1, p+2\right\}}$; \label{dow6}
\item $\mathcal{S}_2 \subset F^{-1}\bracl{\left\{p-1\right\}}$ or $\mathcal{S}_2 \subset F^{-1}\bracl{\left\{p+1\right\}}$; \label{dow7}
\item $\max \,\pi_1\bracl{\bigcup \mathcal{S}_{2}} - \min \, \pi_1\bracl{\bigcup \mathcal{S}_{2}} > \max \,\pi_1\bracl{\bigcup \mathcal{S}_{\max}} - \min \, \pi_1\bracl{\bigcup \mathcal{S}_{\max}}$. \label{dow8}
\end{enumerate}
Once we show properties $(\ref{dow4})$, $(\ref{dow7})$ and $(\ref{dow8})$, the extension of the set $\mathcal{S}_2$ to a set belonging to family $\mathcal{A}$ indicates a contradiction with the definition of $\mathcal{S}_{\max}$ which completes the proof.
\medskip

$(\ref{dow1})$: It follows from Proposition \ref{toconstC21} (i) and (ii) since $\interior{I^2}$ is homeomorphic with $\mathbb{R}^2$ and $\bigcup \mathcal{S}_{\text{max}}$ is closed and connected.
\medskip

$(\ref{dow3})$: Let $x \in \partial_{\interior{I^2}} S$ and let $K_1, K_2 \in \mathcal{K}_k^2$ be distinct squares such that $x \in K_1 \cap K_2$ and $\dim\brac{K_1 \cap K_2 \cap \partial_{\interior{I^2}} S} = 1$. Then, there exists a point $y \in \partial_{\interior{I^2}} S$ such that $B\brac{y, 1\,/\brac{2k}} \subset K_1 \cup K_2$. From the definition of $\partial_{\interior{I^2}} S$ we have $B\brac{y, 1\,/\brac{2k}} \cap S \neq \emptyset$ so $S \cap \brac{K_1 \cup K_2} \neq \emptyset$. Without loss of generality we can assume $S \cap K_1 \neq \emptyset$ so $K_1 \in \mathcal{S}_0$. Moreover $\dim\brac{K_1 \cap K_2 \cap \partial_{\interior{I^2}} S} = 1$ so $\dim\brac{K_1 \cap \partial_{\interior{I^2}} S} = 1$ and thus $K_1 \in \mathcal{S}_2$. Since $x \in K_1$, then $x \in \bigcup \mathcal{S}_2$.
\medskip

$(\ref{dow4})$: Taking into account the definition of $\mathcal{S}_2$ and properties $(\ref{dow1})$ and $(\ref{dow3})$ the result follows immediately from Lemma \ref{conn}.
\medskip

$(\ref{dow5})$: Let $V$ be the set of vertices of all squares from $\mathcal{K}_k^2$ and let $\widetilde{B} = S \cap \bigcup_{x \in \interior{I^2} \setminus S} B_{l^\infty}\brac{x, 1/k}$. By virtue of Proposition \ref{toconstC21} (iii) the set $\widetilde{B}$ is open (in $\interior{I^2}$ so also in $\mathbb{R}^2$) and connected. Thus $B := \widetilde{B} \setminus V$ is also open and connected since $V$ is finite, so $B$ is path-connected. We shall show that for every $K \in \mathcal{K}_k^2$ we have $K \in \mathcal{S}_1 \iff K \cap B \neq \emptyset$.

Indeed, let us fix $K \in \mathcal{S}_1$ and take $y \in \interior{K}$. We have $K \cap S \neq \emptyset$ so obviously $\interior{K} \subset S$ and then $y \in S$. Moreover $K \cap \partial_{\interior{I^2}} S \neq \emptyset$ and $\partial_{\interior{I^2}} S \subset \bigcup \mathcal{S}_{\max}$ from $(\ref{dow1})$ so there exists $K' \in \mathcal{S}_{\max}$ such that $K \cap K' \neq \emptyset$. Hence $\norm{y - x}_\infty < 1/k$ for some $x \in K'$. But $K' \subset \bigcup \mathcal{S}_{\max} \subset \interior{I^2} \setminus S$ so $x \in \interior{I^2} \setminus S$. In this way we showed that $\interior{K} \subset B$ so in particular $K \cap B \neq \emptyset$.

Now, let us suppose that $K \cap B \neq \emptyset$. Obviously $K \cap S \neq \emptyset$ so $K \in \mathcal{S}_{0}$. We take $y \in K \cap B$ and $x \in \interior{I^2} \setminus S$ such that $\norm{y - x}_\infty < 1/k$. Let us notice that for every $K \in \mathcal{K}_k^2$ either $\interior{K} \subset S$ or $\interior{K} \subset \interior{I^2} \setminus S$. Using this observation we can conclude that there exists $K' \in \mathcal{K}_k^2$ such that $x \in K'$ and $\interior{K'} \subset \interior{I^2} \setminus S$. Since $\norm{y - x}_\infty < 1/k$ and $x \in K'$ we get $K' \cap K \neq \emptyset$. But $\interior{K} \subset S$ and $\interior{K'} \subset \interior{I^2} \setminus S$ so $K \cap K' \cap \partial_{\interior{I^2}} S \neq \emptyset$. Thus $K \cap \partial_{\interior{I^2}} S \neq \emptyset$ so finally $K \in \mathcal{S}_1$.

We are now ready to define the required sequence $s$. Let $K, K' \in \mathcal{S}_1$ be distinct. Then $K \cap B \neq \emptyset \neq K' \cap B$. Let $\mathcal{B} = \left\{K \in \mathcal{K}_k^2 \colon\, K \cap B \neq \emptyset\right\}$. Since the set $B$ is path-connected and $B \cap V = \emptyset$, then there exists a sequence $s \colon [N] \to \mathcal{B}$ for some $N>0$ such that $s_1 = K$, $s_N = K'$ and $\dim \brac{s_i \cap s_{i+1}} = 1$ for all $i \in [N-1]$. But $\mathcal{B} \subset \mathcal{S}_1$ so it completes the proof of this property.
\medskip

$(\ref{dow6})$: Let $K \in \mathcal{S}_1$. Then $K \cap \partial_{\interior{I^2}} S \neq \emptyset$ so $K \cap \bigcup \mathcal{S}_{\max} \neq \emptyset$ from property $(\ref{dow1})$. Since $F\left[\mathcal{S}_{\max}\right] = \left\{p\right\}$, then $\abs{F(K) - p} \le 2$ from the property of the function $F$. Moreover $0 < \abs{F(K) - p} \le 2$ from the maximality of $\mathcal{S}_{\max}$ so $\mathcal{S}_1 \subset F^{-1}\left[\left\{p-2, p-1, p+1, p+2\right\}\right]$.

Let $K_1, K_2 \in \mathcal{S}_1$ be distinct squares and let us suppose that $K_1 \in F^{-1}\left[\left\{p+1, p+2\right\}\right]$. We shall show that $K_2 \in F^{-1}\left[\left\{p+1, p+2\right\}\right]$. Having in mind property $(\ref{dow5})$ we can assume that \hbox{$\dim \brac{K_1 \cap K_2} = 1$}. Then, $\abs{F(K_1) - F(K_2)} \le 1$ so $F(K_2) \in \left\{p+1, p+2\right\}$. Thus we obtain $\mathcal{S}_1 \subset F^{-1}\left[\left\{p+1, p+2\right\}\right]$. Analogously we get $\mathcal{S}_1 \subset F^{-1}\left[\left\{p-2, p-1\right\}\right]$ if we assume that $K_1 \in F^{-1}\left[\left\{p-2, p-1\right\}\right]$.
\medskip

$(\ref{dow7})$: From the definition of $\mathcal{S}_2$ and property $(\ref{dow1})$ we can conclude that for every $K \in \mathcal{S}_2$ there exists $K' \in \mathcal{S}_{\max}$ such that $\dim\brac{K \cap K'} = 1$. Then, from the property of the function $F$ and the maximality of $\mathcal{S}_{\max}$ we get $\mathcal{S}_2 \subset F^{-1}\left[\left\{p-1, p+1\right\}\right]$. But $\mathcal{S}_2 \subset \mathcal{S}_1$ so $\mathcal{S}_2 \subset F^{-1}\left[\left\{p-2, p-1\right\}\right]$ or $\mathcal{S}_2 \subset F^{-1}\left[\left\{p+1, p+2\right\}\right]$ from property $(\ref{dow6})$. Hence $\mathcal{S}_2 \subset F^{-1}\left[\left\{p-1\right\}\right]$ or $\mathcal{S}_2 \subset F^{-1}\left[\left\{p+1\right\}\right]$.
\vspace{10pt}

$(\ref{dow8})$: Firstly we shall show that $\max \,\pi_1\bracl{\bigcup \mathcal{S}_{2}} \ge \max \,\pi_1\bracl{\bigcup \mathcal{S}_{\max}}$. For this purpose, among all squares $K \in \mathcal{S}_{\max}$ such that $\max \pi_1\left[K\right] = \max \pi_1\left[\bigcup \mathcal{S}_{\max}\right]$ let us choose $K_0$ for which $\min \pi_2\left[K_0\right]$ has the least value. Obviously $\min \pi_2\left[K_0\right]>0$ since $\min \pi_2\left[\bigcup \mathcal{S}_{\max}\right] > 0$, so there exists $K' \in \mathcal{K}_{k}^2$ such that $\dim\brac{K_0 \cap K'} = 1$ and $\min \pi_2\left[K'\right] < \min \pi_2\left[K_0\right]$ i.e. $K'$ lies immediately below $K_0$. Then $\max \pi_1\left[K'\right] = \max \pi_1\left[K_0\right]$. Let us recall that $S$ is the connected component of $\interior{I^2} \setminus \bigcup \mathcal{S}_{\text{max}}$ containing the point $x_0 = \brac{1/2, 1\,/\brac{2k}}$. Let $K'' \in \mathcal{K}_k^2$ be the square such that $\min \pi_2\bracl{K''} = 0$ and $\max \pi_1\left[K''\right] = \max \pi_1\left[K_0\right]$. Let $x''$ be the center of $K''$, $x'$ be the center of $K'$, $L_1$ be the line segment connecting the point $x_0$ with $x''$ and $L_2$ be the line segment connecting the point $x''$ with $x'$. Then $L := L_1 \cup L_2$ is connected and $L \subset \interior{I^2} \setminus \bigcup \mathcal{S}_{\max}$ from the definitions of $K''$ and $K'$, and the fact that $\min \pi_2\left[\bigcup \mathcal{S}_{\max}\right] > 0$. Moreover $x_0, x' \in L$ so $x' \in S$ and then $K' \cap S \neq \emptyset$. Since $K_0 \in \mathcal{S}_{\max}$, $\dim\brac{K_0 \cap K'} =1$ and $K' \cap S \neq \emptyset$, we obtain $\dim\brac{K' \cap \partial_{\interior{I^2}} S} = 1$ and $K' \in \mathcal{S}_0$ so $K' \in \mathcal{S}_2$. Finally, we conclude that $\max \,\pi_1\bracl{\bigcup \mathcal{S}_{2}} \ge \max \,\pi_1\bracl{K'} = \max \,\pi_1\bracl{\bigcup \mathcal{S}_{\max}}$.

We show the inequality $\min \,\pi_1\bracl{\bigcup \mathcal{S}_{2}} < \min \,\pi_1\bracl{\bigcup \mathcal{S}_{\max}}$ in the similar manner. Among all squares $K \in \mathcal{S}_{\max}$ such that $\min \pi_1\left[K\right] = \min \pi_1\left[\bigcup \mathcal{S}_{\max}\right]$ let us choose $K_0$ for which $\min \pi_2\left[K_0\right]$ has the least value (which is bigger than zero). There exists $K' \in \mathcal{K}_{k}^2$ which lies immediately below $K_0$ and we can show that $K' \in \mathcal{S}_2$ as before. Since $\min \pi_1\bracl{\bigcup \mathcal{S}_{\max}} > 0$, we get $\min \pi_1\bracl{K_0} > 0$. Let $K_1, K_2 \in \mathcal{K}_k^2$ be the squares such that $K_1$ lies immediately to the left of $K_0$ and $K_2$ lies immediately below $K_1$. From the definition of $K_0$ we have $K_1, K_2 \notin \mathcal{S}_{\max}$. Thus $K_1 \cap S \neq \emptyset \neq K_2 \cap S$ since $S$ is the connected component of $\interior{I^2} \setminus \bigcup \mathcal{S}_{\max}$ and $K' \cap S \neq \emptyset$. Moreover $K_0 \in \mathcal{S}_{\max}$ and \hbox{$\dim\brac{K_0 \cap K_1} = 1$} so $\dim\brac{K_1 \cap \partial_{\interior{I^2}} S} = 1$, and then $K_1 \in \mathcal{S}_2$. Therefore we can conclude that $\min \,\pi_1\bracl{\bigcup \mathcal{S}_{2}} \le \min \,\pi_1\bracl{K_1} = \min \,\pi_1\bracl{K_0} - 1/k = \min \,\pi_1\bracl{\bigcup \mathcal{S}_{\max}} - 1/k$.
\end{proof}

\begin{rem}
Let us notice that $\widehat{K}_{\star, 2}\brac{G_{1}^\infty} \le 2$ from Proposition \ref{graph} and it is easy to see that $\widehat{K}_{\star, 2}\brac{G_{1}^\infty} > 1$ so $\widehat{K}_{\star, 2}\brac{G_{1}^\infty} = 2$. On the other hand, we have $\widehat{C}_{2, 1} = 1$ from Proposition \ref{optimalconst1} so in general, constants $\widehat{C}_{n, m}$ and $\widehat{K}_{\star, m+1}\brac{G_{n-1}^\infty}$ may not be equal (cf. Corollary \ref{ineqconst}).
\end{rem}

\begin{prop}\label{optimalconstprop}
For all $0 \le m \le n-1$ such that $n \ge 3$ we have $\widehat{C}_{n, m} >1$.
\end{prop}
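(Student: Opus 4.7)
The monotonicity $\widehat{C}_{n,m_1}\le\widehat{C}_{n,m_2}$ for $m_1\le m_2$ noted right after Theorem~\ref{maincombi} reduces the problem to showing $\widehat{C}_{n,0}>1$ for every $n\ge 3$. Equivalently, for every $n\ge 3$ I have to exhibit a number $k\in\mathbb{N}$ and a function $F\colon\mathcal{K}_k^n\to\mathbb{Z}^{n-1}$ satisfying $\|F(K_1)-F(K_2)\|_\infty\le 1$ whenever $K_1\cap K_2\ne\emptyset$ such that for every $p\in\mathbb{Z}^{n-1}$ no subfamily $\mathcal{S}\subset F^{-1}\bracl{\left\{p\right\}}$ has $\bigcup\mathcal{S}$ connecting some opposite faces of $I^n$. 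My plan is to treat the three-dimensional case directly and pad up for $n\ge 4$.

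For the reduction from $n\ge 4$ to $n=3$, assume a function $F_0\colon\mathcal{K}_k^3\to\mathbb{Z}^2$ (with some $k\ge 2$) has been produced which satisfies the $m=0$ adjacency Lipschitz hypothesis and has no single level set $F_0^{-1}\bracl{\left\{p\right\}}$ admitting a subfamily whose union connects opposite faces of $I^3$. Define
\begin{align*}
F\circ j_{n,k}(i_1,\ldots,i_n) := \bigl(F_0\circ j_{3,k}(i_1,i_2,i_3),\, i_4-1,\ldots,i_n-1\bigr).
\end{align*}
The last $n-3$ coordinates of $F$ are integer coordinate projections of the cube index, each $1$-Lipschitz in the $l^\infty$ sense, so $F$ satisfies the $m=0$ Lipschitz condition on $\mathcal{K}_k^n$ because $F_0$ does on $\mathcal{K}_k^3$. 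Given $q=(q_0,q_4,\ldots,q_n)\in\mathbb{Z}^{n-1}$, the preimage $F^{-1}\bracl{\left\{q\right\}}$ consists exactly of the cubes $j_{n,k}(i_1,i_2,i_3,q_4+1,\ldots,q_n+1)$ with $(i_1,i_2,i_3)\in j_{3,k}^{-1}\bracl{F_0^{-1}\bracl{\left\{q_0\right\}}}$, so every such cube has its $s$-th coordinate in $\bracl{q_s/k,(q_s+1)/k}$ for $s=4,\ldots,n$; since $k\ge 2$ no cube of the preimage (and hence no subfamily) can touch both $I_{s,-}^n$ and $I_{s,+}^n$ for such $s$. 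In the first three directions any connected subfamily of the preimage projects bijectively onto a connected subfamily of $F_0^{-1}\bracl{\left\{q_0\right\}}$, and connecting opposite faces of $I^n$ in those directions is equivalent to connecting the analogous opposite faces of $I^3$, which is excluded by the assumption on $F_0$.

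The three-dimensional step is the substantial part of the argument and the main obstacle. My intended approach is to construct the counterexample $F_0\colon\mathcal{K}_k^3\to\mathbb{Z}^2$ by hand for a small, carefully chosen $k\ge 3$, labelling the $k^3$ cubes with integer pairs so that adjacent cubes receive labels at $l^\infty$-distance at most $1$ while every $1$-connected component of each level set is contained in a strict sub-box of $[k]^3$ missing at least one pair of opposite faces. The guiding idea is that the McShane extension of $F_0$ is a $1$-Lipschitz map $\tilde f\colon I^3\to\mathbb{R}^2$; Theorem~\ref{contthm} supplies a continuous fibre $\tilde f^{-1}(p^*)$ connecting opposite faces of $I^3$, and by arranging $p^*$ to lie on a half-integer hyperplane of $\mathbb{R}^2$ one can hope to split the fibre's $l^\infty$-neighbourhood between two neighbouring integer levels, so that no single $p\in\mathbb{Z}^2$ absorbs a spanning connected piece. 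The verification is a finite but delicate case analysis, because the $m=0$ hypothesis is highly restrictive (it forces each coordinate of $F_0$ to be a $1$-Lipschitz integer map $(\mathbb{Z}^3,\|\cdot\|_\infty)\to\mathbb{Z}$, essentially a tropical combination of coordinate projections) while the non-spanning property must be checked for every integer label simultaneously.
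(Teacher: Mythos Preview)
Your reductions (from general $m$ to $m=0$ via monotonicity, and from $n\ge 4$ to $n=3$ by padding with coordinate projections) are sound. The genuine gap is that you never actually construct the base case $F_0\colon\mathcal{K}_k^3\to\mathbb{Z}^2$. Saying that you will ``construct the counterexample by hand for a small, carefully chosen $k\ge 3$'' and that the verification is ``a finite but delicate case analysis'' is a description of work to be done, not a proof. The heuristic paragraph about McShane extensions and half-integer hyperplanes does not produce a candidate $F_0$, nor does it explain why such an $F_0$ must exist; invoking Theorem~\ref{contthm} here points in the wrong direction, since that theorem guarantees a spanning fibre of the continuous extension, whereas what you need is precisely the failure of any single \emph{integer} level set to span.

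The paper avoids this case analysis entirely with a uniform construction for all $n\ge 3$. The key observation is that $n\ge 3$ is exactly the range where $2^{n-1}\ge n+1$, so there is an injection $g\colon[n+1]\hookrightarrow\{0,1\}^{n-1}$. Take the $(m{+}1)$-distance clustered $(n{+}1)$-colouring $f\colon\mathbb{Z}^n\to[n+1]$ supplied by Proposition~\ref{graph} (more precisely, $\chi_{\star,1}(G_n^\infty)=n+1$), set $M=\widehat{K}_{\star,1}(G_n^\infty)+1$, and define $F=g\circ f\circ j_{n,M}^{-1}\colon\mathcal{K}_M^n\to\{0,1\}^{n-1}$. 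Because the range is $\{0,1\}^{n-1}$, the Lipschitz hypothesis $\|F(K_1)-F(K_2)\|_\infty\le 1$ holds automatically for \emph{all} pairs of cubes, so every $0\le m\le n-1$ is covered at once. If some single level set $F^{-1}\bracl{\{p\}}$ contained a subfamily $\mathcal{S}$ whose union connects opposite faces, then $j_{n,M}^{-1}\bracl{\mathcal{S}}$ would be a $1$-connected subset of $f^{-1}\bracl{\{g^{-1}(p)\}}$ of size $\ge M>\widehat{K}_{\star,1}(G_n^\infty)$, contradicting the clustering bound. This gives $\widehat{C}_{n,m}>1$ directly, with no base case and no hand construction.
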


\begin{proof}
Let $0 \le m \le n-1$ and $n \ge 3$. Conversely, let us suppose that $\widehat{C}_{n, m} = 1$. Since $\chi_{\star, 1} \brac{G_n^\infty} = n+1$, there exists a function $f \colon \mathbb{Z}^n \to [n+1]$ such that for every $i \in [n+1]$ all $1$-connected components of $f^{-1}\left[\left\{i\right\}\right]$ have cardinalities at most $\widehat{K}_{\star, 1} \brac{G_n^\infty}$. Let $M = \widehat{K}_{\star, 1} \brac{G_n^\infty} + 1$. Since $n \ge 3$, there exists a one-to-one function $g \colon [n+1] \to \left\{0, 1\right\}^{n-1}$ so $g$ is a bijection on its image. Let us define \hbox{$F = g \circ f \circ j_{n, M}^{-1} \colon \mathcal{K}_{M}^n \to \left\{0, 1\right\}^{n-1} \subset \mathbb{Z}^{n-1}$}.

Obviously for every $K_1, K_2 \in \mathcal{K}_M^n$ we have $\norm{F(K_1) - F(K_2)}_\infty \le 1$. Since we supposed $\widehat{C}_{n, m} = 1$, there exist $p \in \mathbb{Z}^{n-1}$ and $\mathcal{S} \subset F^{-1}\left[\left\{p\right\}\right]$ such that $\bigcup \mathcal{S}$ connects some opposite faces of $I^n$. We argue in the same way as in Proposition \ref{graph} that $j_{n, M}^{-1}\bracl{\mathcal{S}}$ is $1$-connected and $\abs{j_{n, M}^{-1}\bracl{\mathcal{S}}} > \widehat{K}_{\star, 1}\brac{G_n^\infty}$. Moreover $g \circ f \circ j_{n,M}^{-1}\bracl{\mathcal{S}} = F\bracl{\mathcal{S}} = \left\{p\right\}$ so $j_{n,M}^{-1}\bracl{\mathcal{S}} \subset f^{-1}\left[\left\{g^{-1}(p)\right\}\right]$ and it contradicts the fact that all $1$-connected components of $f^{-1}\left[\left\{g^{-1}(p)\right\}\right]$ have cardinalities at most $\widehat{K}_{\star, 1}\brac{G_n^\infty}$.
\end{proof}

\begin{cor}\label{optimalconst2}
    $\widehat{C}_{3, 0} = 2$.
\end{cor}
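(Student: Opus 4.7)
The proof is essentially a combination of two results already established in the paper, so I would keep it very short.

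First I would combine Corollary \ref{ineqconst} with the particular values $n=3$, $m=0$: this gives
\[
\widehat{C}_{3,0} \le \widehat{K}_{\star, 1}\bigl(G_{2}^\infty\bigr) \le (n-1)!\,(m+1)^{n-1} = 2!\cdot 1^{2} = 2,
\]
so the upper bound $\widehat{C}_{3,0} \le 2$ is immediate.

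Next I would invoke Proposition \ref{optimalconstprop} with $n=3$, $m=0$ (we are in the allowed range $n\ge 3$, $0 \le m \le n-1$), which yields $\widehat{C}_{3,0} > 1$.

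The only remaining observation I would make explicit is that $\widehat{C}_{3,0}$ can be taken to be a positive integer: indeed, the property in Theorem \ref{maincombi} depends on $C_{n,m}$ only through the inequality $\lvert P\rvert \le C_{n,m}$, where $\lvert P\rvert$ is an integer, so the infimum is attained at an integer value. Combining $1 < \widehat{C}_{3,0} \le 2$ with integrality gives $\widehat{C}_{3,0} = 2$. I do not anticipate any obstacle; the substance of the work has already been done in Proposition \ref{graph} (for the upper bound) and in Proposition \ref{optimalconstprop} (for the strict lower bound).
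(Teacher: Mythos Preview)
Your proof is correct and follows exactly the same approach as the paper, which also combines Proposition~\ref{optimalconstprop} (for $\widehat{C}_{3,0}>1$) with Corollary~\ref{ineqconst} (for $\widehat{C}_{3,0}\le 2$). Your explicit remark about integrality is a small clarification the paper leaves implicit, but otherwise the arguments coincide.
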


\begin{proof}
On the one hand $\widehat{C}_{3, 0} > 1$ from Proposition \ref{optimalconstprop} but on the other hand $\widehat{C}_{3, 0} \le 2$ from Corollary \ref{ineqconst}.
\end{proof}

\section{The continuous result}\label{sectionmainconti}

In this section, we prove Theorem \ref{contthm}, stated in the Introduction, using Theorem \ref{maincombi} together with techniques based on Hausdorff convergence. We can interpret Theorem \ref{contthm} as a continuous analogue of Theorem \ref{maincombi}. Moreover, we show that Theorem \ref{contthm} cannot be generalized to path-connected sets.

\begin{twmainB}
Let $n \in \mathbb{N}$ and $f \colon I^n \to \mathbb{R}^{n-1}$ be a continuous function. Then there exist a point $p \in \mathbb{R}^{n-1}$ and a compact subset $S \subset f^{-1}\bracl{\left\{p\right\}}$ which connects some opposite faces of $I^n$.
\end{twmainB}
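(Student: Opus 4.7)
The plan is to use Theorem \ref{maincombi} to solve a sequence of finer and finer discrete approximations, and then extract a Hausdorff limit of the resulting connected sets.

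I will first handle the trivial case when $f$ is constant (any fiber works) and the trivial case $n=1$, and then assume $f$ is non-constant with $n\ge 2$. For each $k\in\mathbb{N}$, let $\varepsilon_k=\omega_f(\sqrt{n}/k)>0$, where $\omega_f$ is the modulus of continuity of $f$ with respect to the Euclidean metric. For each cube $K\in\mathcal{K}_k^n$ let $x_K$ denote its center, and define
\begin{align*}
F_k\colon\mathcal{K}_k^n\to\mathbb{Z}^{n-1},\qquad F_k(K)=\Bigl(\lfloor (f(x_K))_j/\varepsilon_k\rfloor\Bigr)_{j=1}^{n-1}.
\end{align*}
If $\dim(K_1\cap K_2)\ge 0$, then $|x_{K_1}-x_{K_2}|\le \sqrt{n}/k$, so $\|f(x_{K_1})-f(x_{K_2})\|_\infty\le \varepsilon_k$, which by the floor bound gives $\|F_k(K_1)-F_k(K_2)\|_\infty\le 1$. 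Hence $F_k$ satisfies the hypothesis of Theorem \ref{maincombi} with $m=0$.

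Applying Theorem \ref{maincombi} (with $C=C_{n,0}$), for each $k$ I obtain a $1$-connected $P_k\subset\mathbb{Z}^{n-1}$ with $|P_k|\le C$ and $\mathcal{S}_k\subset F_k^{-1}[P_k]$ such that $S_k:=\bigcup\mathcal{S}_k$ is a compact connected subset of $I^n$ connecting some opposite faces. By pigeonholing the index $i\in[n]$ of the opposite face pair, I may pass to a subsequence on which $S_k$ connects the same $i$th opposite faces. Since $I^n$ is compact, by Theorem \ref{blaschke} the space $(\mathfrak{C}(I^n),d_H)$ is compact, so after a further subsequence $S_k\to S$ in Hausdorff distance for some $S\in\mathfrak{C}(I^n)$. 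By Theorem \ref{golab} the set $S$ is connected, and Observation \ref{hausdorff}(1) applied to any sequence of points in $S_k\cap I_{i,\pm}^n$ (using that $I_{i,\pm}^n$ is closed) shows $S\cap I_{i,-}^n\neq\emptyset\neq S\cap I_{i,+}^n$. Thus $S$ connects $i$th opposite faces of $I^n$.

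The key remaining step — and the main obstacle — is to show that $S$ is contained in a single fiber $f^{-1}[\{p\}]$. This exploits the size bound on $P_k$. Since $P_k$ is $1$-connected in $\mathbb{Z}^{n-1}$ with $|P_k|\le C$, any two of its points differ by at most $C-1$ in the $\ell^\infty$ norm. For any $K,K'\in\mathcal{S}_k$ this gives $\|F_k(K)-F_k(K')\|_\infty\le C-1$, so by the definition of $F_k$,
\begin{align*}
\|f(x_K)-f(x_{K'})\|_\infty\le C\varepsilon_k.
\end{align*}
For any $y\in S_k$, $y$ lies in some $K\in\mathcal{S}_k$ and $\|f(y)-f(x_K)\|_\infty\le\varepsilon_k$. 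Therefore $\mathrm{diam}_{\ell^\infty}(f(S_k))\le (C+1)\varepsilon_k\to 0$. Pick any $p_k\in f(S_k)$; by compactness of $f(I^n)$, passing to a further subsequence, $p_k\to p$ for some $p\in\mathbb{R}^{n-1}$. For an arbitrary $y\in S$, Observation \ref{hausdorff}(1) gives $y_k\in S_k$ with $y_k\to y$, whence $\|f(y_k)-p_k\|_\infty\to 0$ and continuity of $f$ yields $f(y)=p$. Hence $S\subset f^{-1}[\{p\}]$, completing the proof.
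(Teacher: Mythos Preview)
Your proof is correct and follows the same overall strategy as the paper: discretize via Theorem \ref{maincombi}, then pass to a Hausdorff limit. The paper packages the discretization step as a separate approximate lemma and builds the map $F$ through a Lebesgue-number argument on an open cover by preimages of slightly enlarged cubes, whereas you obtain $F_k$ more directly via $\lfloor f(x_K)/\varepsilon_k\rfloor$ and the modulus of continuity; both routes feed into Theorem \ref{maincombi} with $m=0$ and the subsequent limiting argument is essentially identical.

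Two small slips worth fixing: when you show $S\cap I_{i,\pm}^n\neq\emptyset$ you need Observation \ref{hausdorff}(2) (extracting a limit of $x_k\in S_k$), not (1); and the diameter bound should read $(C+2)\varepsilon_k$ rather than $(C+1)\varepsilon_k$ after the triangle inequality, though of course this does not affect the conclusion.
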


\begin{proof}
We will first consider the approximate problem.

\begin{lem}\label{contlemma}
Let $n \in \mathbb{N}$ and $f \colon I^n \to \mathbb{R}^{n-1}$ be a continuous function. Then for every $\varepsilon > 0$ there exist a point $p \in \mathbb{R}^{n-1}$ and a compact subset $S \subset f^{-1}\bracl{B(p, \varepsilon)}$ which connects some opposite faces of $I^n$.
\end{lem}

\begin{proof}
The case $n=1$ is trivial so we assume $n \ge 2$. Let us fix $\varepsilon>0$. Obviously $f\bracl{I^n}$ is compact so we can assume\footnote{If $f\bracl{I^n} \subset \bracl{-M, M}^{n-1}$ for some $M>0$ and $x_0 = (1/2, \ldots, 1/2) \in \mathbb{R}^{n-1}$, then for $f_0 = f\,/\brac{2M}+ x_0$ we have $f_0\bracl{I^n} \subset I^{n-1}$. If $S \subset f_0^{-1}\bracl{B\brac{p, \varepsilon}}$, then $S \subset f^{-1}\bracl{B\brac{2M(p - x_0), 2M\varepsilon}}$.} (after possible rescaling and translation of $f$) that $f\bracl{I^n} \subset I^{n-1}$. Let $k \in \mathbb{N}$ be such that $3 \sqrt{n-1}\,\widehat{C}_{n, 0}\,/\brac{2k} < \varepsilon$. Then, the family $\mathcal{K}_k^{n-1}$ covers $I^{n-1}$ and $\diam K = \sqrt{n-1}/k$ for every $K \in \mathcal{K}_k^{n-1}$. We slightly enlarge cubes from the family $\mathcal{K}_k^{n-1}$ to open cubes. Namely, let $h \colon \mathcal{K}_{k}^{n-1} \to \mathcal{K}'$ where $\mathcal{K}' = h\bracl{\mathcal{K}_{k}^{n-1}}$ be a bijection defined as $h\brac{K} = \bigcup_{x \in K} B_{l^\infty} \brac{x, 1\,/\brac{4k}}$ for $K \in \mathcal{K}_k^{n-1}$. Obviously, for $K' \in \mathcal{K}'$ we have $\diam K' = \brac{1/k \;+\; 1\,/\brac{2k}} \sqrt{n-1} = 3\sqrt{n-1}\,/\brac{2k}$. Moreover, the following property of the function $h$ holds: for every $K_1, K_2 \in \mathcal{K}_k^{n-1}$ we have $K_1 \cap K_2 \neq \emptyset$ if and only if $h\brac{K_1} \cap h\brac{K_2} \neq \emptyset$.

The family $\mathcal{U} = \left\{f^{-1}\bracl{K'} \colon K' \in \mathcal{K}'\right\}$ is an open cover of $I^n$. Let $\delta >0$ be the Lebesgue number of this cover i.e. if $A \subset I^n$ and $\diam A < \delta$, then $A \subset f^{-1}\bracl{K'}$ for some $K' \in \mathcal{K}'$. Let $m \in \mathbb{N}$ be such that $\sqrt{n}/m < \delta$. Thus, if $K \in \mathcal{K}_m^n$, then $\diam K = \sqrt{n}/m < \delta$ so $K \subset f^{-1}\bracl{K'}$ for some $K' \in \mathcal{K}'$. Hence we can choose some function $g \colon \mathcal{K}_m^n \to \mathcal{K'}$ such that $K \subset f^{-1} \bracl{g\brac{K}}$ for $K \in \mathcal{K}_m^n$. Finally we define $F \colon \mathcal{K}_m^n \to [k]^{n-1}$ as $F = \widehat{j}_{n-1, k}^{-1} \circ g$ where $\widehat{j}_{n-1, k} = h \circ j_{n-1, k}$. Hence if $F(K) = i$, then $f\bracl{K} \subset \widehat{j}_{n-1, k}(i)$.

We shall show that for every $K_1, K_2 \in \mathcal{K}_m^n$ we have $\norm{F(K_1) - F(K_2)}_\infty \le 1$ if $\dim \brac{K_1 \cap K_2} \ge 0$. Let us fix $K_1, K_2 \in \mathcal{K}_m^n$ and suppose that $\dim \brac{K_1 \cap K_2} \ge 0$. Then $K_1 \cap K_2 \neq \emptyset$ so let us take $x \in K_1 \cap K_2$. Let $F(K_1) = i$ and $F(K_2) = i'$. Then $f\bracl{K_1} \subset \widehat{j}_{n-1, k}(i)$ and $f\bracl{K_2} \subset \widehat{j}_{n-1, k}(i')$ so $f(x) \in \widehat{j}_{n-1, k}(i) \cap \widehat{j}_{n-1, k}(i')$. In particular $\widehat{j}_{n-1, k}(i) \cap \widehat{j}_{n-1, k}(i') \neq\emptyset$ so $j_{n-1, k}(i) \cap j_{n-1, k}(i') \neq\emptyset$ by the property of the function $h$ and then $\norm{i - i'}_\infty \le 1$ by Observation \ref{obsforj} $(2)$.

By virtue of Theorem \ref{maincombi} there exist a $1$-connected subset $P \subset [k]^{n-1}$ with $\abs{P} \le \widehat{C}_{n, 0}$ and a~subset $\mathcal{S} \subset F^{-1}\bracl{P}$ such that $S := \bigcup \mathcal{S}$ connects some opposite faces of $I^n$. Let us notice that for $K \in F^{-1}\bracl{P}$ we have $K \subset f^{-1}\bracl{\bigcup \widehat{j}_{n-1, k}\bracl{P}}$ so $S \subset \bigcup F^{-1}\bracl{P} \subset f^{-1}\bracl{\bigcup \widehat{j}_{n-1, k}\bracl{P}}$. Let us take $p \in P$ and $z \in \widehat{j}_{n-1, k}(p)$. To complete the proof it is enough to show that $\bigcup \widehat{j}_{n-1, k}\bracl{P} \subset B\brac{z, \varepsilon}$.

For this purpose let $z' \in \bigcup \widehat{j}_{n-1, k}\bracl{P}$. Then $z' \in \widehat{j}_{n-1, k}(p')$ for some $p' \in P$. Since $P$ is $1$-connected and $\abs{P} \le \widehat{C}_{n, 0}$, there exists a sequence $s \colon [N] \to P$ for some $N>0$ such that $N \le \widehat{C}_{n, 0}$, $s_1 = p$, $s_N = p'$ and $\norm{s_{l} - s_{l+1}}_\infty \le 1$ for all $l \in [N-1]$. Thus $j_{n-1, k}(s_{l+1}) \cap j_{n-1, k}(s_l) \neq\emptyset$ for all $l \in [N-1]$ by Observation \ref{obsforj} $(2)$ and then $\widehat{j}_{n-1, k}(s_{l+1}) \cap \widehat{j}_{n-1, k}(s_l) \neq\emptyset$ for all $l \in [N-1]$. Let $x_0 = z, x_{N} = z'$ and $x_l \in \widehat{j}_{n-1, k}(s_{l}) \cap \widehat{j}_{n-1, k}(s_{l+1})$ for $l \in [N-1]$. Then $x_l, x_{l+1} \in \widehat{j}_{n-1, k}(s_{l+1})$ so $\norm{x_l - x_{l+1}} \le \diam \widehat{j}_{n-1, k}(s_{l+1}) = 3\sqrt{n-1}\,/\brac{2k}$ for all $0 \le l < N$. Finally,
\begin{align*}
    \norm{z - z'} = \norm{x_0 - x_N} \le \sum_{l=0}^{N-1} \norm{x_l - x_{l+1}} \le \sum_{l=0}^{N-1} \frac{3\sqrt{n-1}}{2k} = \frac{3\sqrt{n-1}}{2k} N \le \frac{3\sqrt{n-1}}{2k} \widehat{C}_{n, 0} < \varepsilon 
\end{align*}
so $z' \in B(z, \varepsilon)$.
\end{proof}

Now we are in position to prove Theorem \ref{contthm}. By virtue of Lemma \ref{contlemma} for every $m \in \mathbb{N}$ there exist a~compact set $S_m \subset I^n$ which connects $i_m$th opposite faces of $I^n$ for some $i_m \in [n]$ and a point $p_m \in \mathbb{R}^{n-1}$ such that $S_m \subset f^{-1}\bracl{\,\overline{B}\brac{p_m, 1/m}}$. There exists $i \in [n]$ such that infinitely many sets $S_m$ connect $i$th opposite faces of $I^n$ so we can assume that $i_m = i$ for every $m \in \mathbb{N}$. Obviously the sequence of points $p_m$ is bounded so it has a subsequence convergent to some $p \in \mathbb{R}^{n-1}$. We can assume that $\lim_{m\to\infty} p_m = p$. Since $S_m$ are compact sets and they are contained in the compact set $I^n$, by virtue of Theorem \ref{blaschke} $(2)$ the sequence of sets $S_m$ has a subsequence convergent in Hausdorff sense to some compact set $S$. We can assume that $\lim_{m\to\infty} S_m = S$. We shall show that $S$ connects $i$th opposite faces of $I^n$.

Indeed, sets $S_m$ are connected so $S$ is also connected by Theorem \ref{golab}. Sets $S_m$ connect $i$th opposite faces of $I^n$ so let us take $x_m \in S_m \cap I_{i, -}^n$. Since $x_m \in S_m$ and $\lim_{m\to\infty} S_m = S$, there exists a~subsequence~$x_{m_k}$ convergent to some $x \in S$ from Observation \ref{hausdorff}. On the other hand, $x_{m_k} \in I_{i, -}^n$ and the set $I_{i, -}^n$ is closed so $x \in I_{i, -}^n$. Hence $x \in S \cap I_{i, -}^n$ so $S \cap I_{i, -}^n \neq \emptyset$. We can show $S \cap I_{i, +}^n \neq \emptyset$ in the same way.

To complete the proof it is enough to show that $S \subset f^{-1}\bracl{\left\{p\right\}}$. For that, let us take $x \in S$. Then, from Observation \ref{hausdorff} there exists a sequence of points $x_m \in S_m \subset f^{-1}\bracl{\,\overline{B}\brac{p_m, 1/m}}$ convergent to $x$. Thus $\norm{f(x_m) - p_m} \le 1/m$ for every $m \in \mathbb{N}$ so $f(x) = p$ since $\lim_{m\to\infty} p_m = p$ and $\lim_{m\to\infty} f(x_m) = f(x)$. Hence $x \in f^{-1}\bracl{\left\{p\right\}}$.
\end{proof}

The following example shows that Theorem \ref{contthm} cannot be generalized to path-connected sets for any $n \ge 2$ (cf. \cite[Remarks]{waksman} for $n=2$).

\begin{ex}\label{counterexamplesinus}
Let $n \ge 2$. There exists a continuous (and even smooth) function $f \colon I^n \to \mathbb{R}^{n-1}$ such that for every $p \in \mathbb{R}^{n-1}$ every path-connected subset $S \subset f^{-1}\bracl{\left\{p\right\}}$ does not connect $i$th opposite faces of $I^n$ for any $i \in [n]$.
\end{ex}

\begin{proof}
Let us define a continuous function $g \colon I \setminus \left\{1/4\right\} \to I$ as follows:
\begin{align*}
g(x) = \begin{cases}
-2x+1 & \text{if } 0 \le x < \frac{1}{4}; \\
\frac{1}{4} \sin\brac{\frac{1}{x-\frac{1}{4}}} + \frac{1}{2} & \text{if } \frac{1}{4} < x \le \frac{1}{2}; \\
-\brac{\frac{1}{2} \sin 4 + 1}x + \frac{1}{2} \sin 4 + 1 & \text{if } \frac{1}{2} < x \le 1.
\end{cases}
\end{align*}

\begin{figure}[h!]
\centering
     \includegraphics[width=0.5\linewidth]{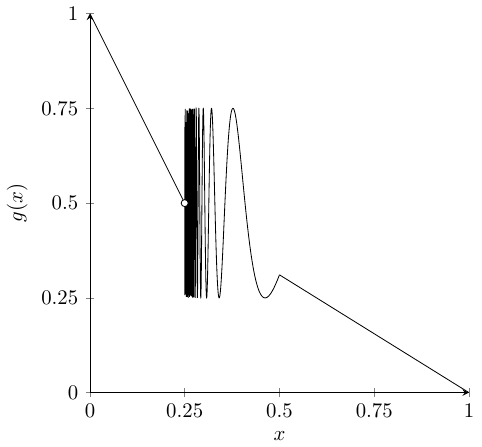}
     \caption{The graph of the function $g$.}
\end{figure}

Let $G$ be the graph of the function $g$, which additionally includes the line segment connecting the points $(1/4, 1/4)$ and $(1/4, 3/4)$ i.e.
\begin{align*}
    G = \left\{\brac{x, g(x)}\colon\; x \in I \setminus \left\{1/4\right\}\right\} \cup \left\{1/4\right\} \times \bracl{1/4, 3/4} \subset I^2.
\end{align*}

From the definition of $g$ and the fact that $G$ contains a part of rescaled topologist's sine curve we conclude that $G$ is closed and connected but not path-connected, and there is no path in $G$ connecting the points $(0, 1)$ and $(1, 0)$. 

For $i \in [n]$ let $\pi_i \colon \mathbb{R}^n \to \mathbb{R}$ be the projection onto the $i$th coordinate, and for $i, j \in [n]$ we define $\pi_{i, j} := (\pi_i, \pi_j) \colon \mathbb{R}^n \to \mathbb{R}^2$. For each $j \in [n-1]$, let $G_j = \pi_{j, j+1}^{-1}\bracl{G} \subset I^n$, and let $f_j \colon I^n \to \mathbb{R}$ be an~arbitrary smooth function satisfying $f_j^{-1}\bracl{\left\{0\right\}} = G_j$, which existence is guaranteed by Theorem \ref{extending}. Finally, let $f = (f_1, \ldots, f_{n-1}) \colon I^n \to \mathbb{R}^{n-1}$.

Let us suppose there exist $p \in \mathbb{R}^{n-1}$ and a path-connected subset $S \subset f^{-1}\bracl{\left\{p\right\}}$ that connects $i$th opposite faces of $I^n$ for some $i \in [n]$. Let $x^- \in S \cap I^n_{i, -}$ and $x^+ \in S \cap I^n_{i, +}$, which implies that $x^-_i = 0$ and $x^+_i = 1$. Let $S_i = \pi_{i, i+1}\bracl{S}$, which is a path-connected subset of $I^2$. We claim that $p_i = 0$. Indeed, let us assume, for the sake of contradiction, that $p_i \neq 0$. Then $S \subset f^{-1}\bracl{\left\{p\right\}} \subset f_i^{-1}\bracl{\left\{p_i\right\}}$, and consequently $S \cap G_i = \emptyset$. It follows that $S_i \cap G = \emptyset$ so $(x_i^\varepsilon, x_{i+1}^\varepsilon) = \pi_{i, i+1}(x^\varepsilon) \notin G$ for any $\varepsilon \in \left\{-, +\right\}$. Hence $x_{i+1}^- < 1$ and $x_{i+1}^+ > 0$. Let us define sets
\begin{align*}
&U_{<} = \left\{(x, y) \in \brac{I \setminus \left\{1/4\right\}} \times I\colon\; y < g(x)\right\} \cup \left\{1/4\right\} \times [0,\left.1/4\right), \\ &
U_{>} = \left\{(x, y) \in \brac{I \setminus \left\{1/4\right\}} \times I\colon\; y > g(x)\right\} \cup \left\{1/4\right\} \times \left(3/4,\right.1],
\end{align*}
i.e. the sets $U_{<}$ and $U_{>}$ are subsets of $I^2$ that lie, informally, strictly below and above the set $G$, respectively. Obviously these sets are open in $I^2$ and disjoint, and $I^2 = U_< \cup G \cup U_>$. Since $S_i \cap G = \emptyset$, then $S_i \subset I^2 \setminus G = U_< \cup U_>$ so $S_i = \brac{S_i \cap U_<} \cup \brac{S_i \cap U_>}$. But sets $S_i \cap U_<$ and $S_i \cap U_>$ are open in $S_i$, disjoint and nonempty since $\pi_{i, i+1}(x^-) \in S_i \cap U_<$ and $\pi_{i, i+1}(x^+) \in S_i \cap U_>$ so it contradicts the connectedness of $S_i$.

In that context, we obtain $p_i = 0$, which implies that $S \subset f_i^{-1}\bracl{\left\{0\right\}} = G_i$. Consequently, $S_i \subset G$, and it is clear that $\pi_{i, i+1}(x^-) = (0, 1)$ and $\pi_{i, i+1}(x^+) = (1, 0)$. However, since there is no path in $G$ connecting the points $(0, 1)$ and $(1, 0)$, this contradicts the fact that $S_i$ is path-connected.
\end{proof}

\section{Consequences of the continuous result}\label{sectionconseq}
We proved Theorem \ref{contthm} using Theorem \ref{maincombi}, which in turn relies on the version of the Steinhaus Chessboard Theorem (Theorem \ref{chessboardthm}). Although a proof of Theorem \ref{chessboardthm} is independent of Theorem \ref{contthm}, it turns out that, once Theorem \ref{contthm} has been established, Theorem \ref{chessboardthm} becomes a relatively straightforward consequence of it (see Theorem \ref{chessbymainth}). Moreover, Theorem \ref{contthm} also serves as a tool for proving the Brouwer Fixed Point Theorem (see Theorem \ref{brouwer}).


\begin{tw}\label{chessbymainth}
    The version of the Steinhaus Chessboard Theorem is a consequence of Theorem \ref{contthm}.
\end{tw}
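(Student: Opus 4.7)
The plan is to derive Chessboard from Theorem \ref{contthm} by turning the coloring $F$ into a continuous function built from distance functions. Given $F \colon \mathcal{K}_k^n \to [n]$, I would first set $A_i := \bigcup F^{-1}\bracl{\left\{i\right\}}$ for each $i \in [n]$. Every $A_i$ is closed (a finite union of closed cubes) and together the $A_i$'s cover $I^n$. Crucially I use only $n-1$ of them: define the continuous function $f \colon I^n \to \mathbb{R}^{n-1}$ by $f(x) = \brac{d(x, A_1), \ldots, d(x, A_{n-1})}$, with the convention $d(\cdot, \emptyset) \equiv 1$ if some $A_i$ happens to be empty. Theorem \ref{contthm} then yields a point $p = (p_1, \ldots, p_{n-1}) \in \mathbb{R}^{n-1}$ and a compact connected set $S \subset f^{-1}\bracl{\left\{p\right\}}$ that connects $i_0$-th opposite faces of $I^n$ for some $i_0 \in [n]$.

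The central step is locating the monochromatic color. For every $j \in [n-1]$ and every $x \in S$ we have $d(x, A_j) = p_j$, so either $p_j = 0$, in which case $S \subset \overline{A_j} = A_j$, or $p_j > 0$, in which case $S \cap A_j = \emptyset$. I split into cases. If $p_j = 0$ for some $j \in [n-1]$, I take the color $j_0 := j$. Otherwise $p_j > 0$ for every $j \in [n-1]$; then $S$ is disjoint from $A_1 \cup \ldots \cup A_{n-1}$, and the covering $I^n = \bigcup_{i=1}^n A_i$ forces $S \subset A_n$, so I take $j_0 := n$. This alternative is the only clever point of the argument and explains why precisely $n-1$ distance coordinates suffice for $n$ color classes: the "missing" color $A_n$ is always available as a fallback.

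To finish, I would set $\mathcal{S} := \left\{K \in F^{-1}\bracl{\left\{j_0\right\}} \colon K \cap S \neq \emptyset\right\}$. Every $x \in S \subset A_{j_0}$ lies in some color-$j_0$ cube, so $S \subset \bigcup \mathcal{S}$. Lemma \ref{conn}, applied with the connected set $S$ and the family $\mathcal{S}$ of connected cubes (each meeting $S$, together covering $S$), gives that $\bigcup \mathcal{S}$ is connected. Since $\bigcup \mathcal{S} \supseteq S$, it meets both $I^n_{i_0, -}$ and $I^n_{i_0, +}$, which is exactly the Steinhaus Chessboard conclusion for the color $j_0$. There is no real obstacle — once the distance-based $f$ is written down, everything follows from the covering identity $I^n = \bigcup_{i=1}^n A_i$ and the case split on whether any coordinate of $p$ vanishes.
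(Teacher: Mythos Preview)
Your proof is correct and follows essentially the same route as the paper: build $f$ from $n-1$ continuous functions whose zero sets are the color classes $A_1,\ldots,A_{n-1}$, apply Theorem~\ref{contthm}, split on whether some coordinate of $p$ vanishes, and finish via Lemma~\ref{conn}. The only difference is cosmetic: you take the explicit distance functions $f_j=d(\cdot,A_j)$, whereas the paper invokes Theorem~\ref{extending} to get arbitrary continuous $f_j$ with $f_j^{-1}\bracl{\{0\}}=A_j$; your choice is simply a concrete instance of theirs.
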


\begin{proof}
Let $n, k \in \mathbb{N}, n \ge 2$ and $F \colon \mathcal{K}_k^n \to [n]$ be an arbitrary function. For $i \in [n-1]$ let $f_i \colon I^n \to \mathbb{R}$ be an arbitrary continuous function such that $f_i^{-1}\bracl{\left\{0\right\}} = \bigcup F^{-1}\bracl{\left\{i\right\}}$. We can choose such functions $f_i$ since the sets $\bigcup F^{-1}\bracl{\left\{i\right\}}$ are closed (see Theorem \ref{extending}). Let us define a continuous function $f \colon I^n \to \mathbb{R}^{n-1}$ as $f = (f_1, f_2, \ldots, f_{n-1})$. By virtue of Theorem \ref{contthm} there exist a point $p \in \mathbb{R}^{n-1}$ and a compact subset $S \subset f^{-1}\bracl{\left\{p\right\}}$ which connects $i$th opposite faces of $I^n$ for some $i \in [n]$.

Let us suppose that $p_j = 0$ for some $j \in [n-1]$. Thus $S \subset f_j^{-1}\bracl{\left\{p_j\right\}} = f_j^{-1}\bracl{\left\{0\right\}} =\bigcup F^{-1}\bracl{\left\{j\right\}}$. Let us denote $\mathcal{S}' = \left\{K \in F^{-1}\bracl{\left\{j\right\}}\colon\; S \cap K \neq \emptyset\right\} \subset F^{-1}\bracl{\left\{j\right\}}$. We obtain $S \subset \bigcup \mathcal{S}'$ and then $\bigcup \mathcal{S}'$ is connected by virtue of Lemma \ref{conn} and connects $i$th opposite faces of $I^n$ since $S$ does.

Now, let us suppose that $p_j \neq 0$ for every $j \in [n-1]$. Then $S \cap \bigcup F^{-1}\bracl{\left\{j\right\}} \subset f_j^{-1}\bracl{\left\{p_j\right\}} \cap f_j^{-1}\bracl{\left\{0\right\}} = \emptyset$ for every $j \in [n-1]$ so $S \subset \bigcup F^{-1}\bracl{\left\{n\right\}}$ and we proceed in the same way as above.
\end{proof}

\begin{tw}\label{brouwer}
    The Brouwer Fixed Point Theorem is a consequence of Theorem \ref{contthm}.
\end{tw}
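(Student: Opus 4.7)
I would argue by contradiction. Suppose $F \colon I^n \to I^n$ is continuous and fixed-point free, and set $g(x) := F(x) - x$, which is then a continuous map $I^n \to [-1,1]^n$ that is nowhere zero. Because $F_k(x) \in [0,1]$, the components of $g$ automatically satisfy the sign conditions $g_k(x) \ge 0$ on $I^n_{k,-}$ and $g_k(x) \le 0$ on $I^n_{k,+}$ for every $k \in [n]$.

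The natural first move is to apply Theorem \ref{contthm} to the reduced map $\tilde g := (g_1, \dots, g_{n-1}) \colon I^n \to \mathbb{R}^{n-1}$, obtaining a point $p \in \mathbb{R}^{n-1}$ and a compact connected subset $S \subseteq \tilde g^{-1}\bracl{\left\{p\right\}}$ which connects the $i$-th pair of opposite faces of $I^n$ for some $i \in [n]$. On $S$ one then has $g_j(x) = p_j$ for every $j \in [n-1]$. In the case $i \le n-1$, the sign conditions on $g_i$ along $I^n_{i,\pm}$ together with the constancy $g_i \equiv p_i$ on $S$ immediately force $p_i = 0$, that is, $F_i \equiv x_i$ on $S$. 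In the case $i = n$, the set $S$ meets both $I^n_{n,-}$ (where $g_n \ge 0$) and $I^n_{n,+}$ (where $g_n \le 0$); since $S$ is connected, the intermediate value theorem applied to $g_n$ restricted to $S$ yields $y \in S$ with $g_n(y) = 0$ and $F(y)-y = (p_1,\dots,p_{n-1},0)$. In either subcase, a genuine fixed point is obtained provided $p = 0$.

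The main obstacle is that Theorem \ref{contthm} does not directly control the value of $p$: one must still rule out the remaining subcases where some $p_j \ne 0$. My plan to overcome this is to replace $g$ by its normalized version $\bar g := g / \|g\| \colon I^n \to S^{n-1}$, which is well-defined thanks to the no-fixed-point hypothesis, and to apply Theorem \ref{contthm} to $(\bar g_1,\dots,\bar g_{n-1})$ instead. Then on the resulting connected level set $S$ one has $\bar g_j \equiv p_j$ for $j \le n-1$, and the spherical constraint $\sum_{k=1}^n \bar g_k^2 = 1$ forces $\bar g_n^2 \equiv 1 - \|p\|^2$, so $\bar g_n$ takes at most the two values $\pm\sqrt{1-\|p\|^2}$ on $S$; by continuity on the connected $S$, $\bar g_n$ is constant, and hence $\bar g \equiv \xi$ on $S$ for some fixed unit vector $\xi$. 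The sign conditions at the $i$-th opposite pair of faces then yield $\xi_i = 0$ in both the cases $i \le n-1$ and $i = n$.

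The final (and most delicate) step is to extract a contradiction from this rigid picture: on $S$ we have $F(x) = x + \|g(x)\| \xi$ with $\xi$ a fixed unit vector perpendicular to $e_i$, while $x_i$ must traverse all of $[0,1]$ along $S$. The plan is to exploit this over-constrained geometry—for instance by studying the linear functional $\phi(x) := \langle x, \xi \rangle$, which strictly increases along $F$-orbits on $S$, or by reapplying Theorem \ref{contthm} to a secondary auxiliary function whose level sets are forced to miss $S$ in a topologically impossible way—to reach the desired contradiction. Closing this final geometric-topological step is the crux of the argument, and I expect it to be the hardest part of the proof.
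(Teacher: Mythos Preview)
Your reduction up to the point where $\bar g \equiv \xi$ on $S$ with $\xi_i = 0$ is correct, but the proof is genuinely unfinished: the ``final geometric-topological step'' you flag as the crux is not a technicality, it is the whole difficulty, and neither of your two suggested mechanisms closes it. The functional $\phi(x) = \langle x,\xi\rangle$ does satisfy $\phi(F(x)) > \phi(x)$ for $x \in S$, but $F(x)$ need not lie in $S$, so there is no iteration to run; maximising $\phi$ over $S$ only yields $\lambda(x^*) \le \max_{I^n}\phi - \max_S\phi$, which is no contradiction. Likewise, ``reapplying Theorem~\ref{contthm} to a secondary auxiliary function'' is not a plan until you say which function and on which cube: the only structured set you have produced is $S$, and $S$ is not a cube, so Theorem~\ref{contthm} does not apply to it. From the information $F(x) = x + \lambda(x)\xi$ on $S$ with $\xi \perp e_i$, one can only deduce things like $S \cap I^n_{j,+} = \emptyset$ for indices $j$ with $\xi_j > 0$; none of this contradicts $S$ connecting the $i$-th pair of faces.

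The paper avoids this dead end by switching to the no-retraction formulation. Assuming a retraction $r \colon I^n \to \partial I^n$, it picks any continuous $g \colon \partial I^n \to \mathbb{R}^{n-1}$ whose fibres have at most two points (e.g.\ a homeomorphism $\partial I^n \cong S^{n-1}$ followed by the projection dropping the last coordinate), and applies Theorem~\ref{contthm} to $f = g \circ r$. The resulting compact connected $S \subset f^{-1}\bracl{\{p\}}$ meets $\partial I^n$ in at least two points (one on each of the connected opposite faces), and since $\restr{f}{\partial I^n} = g$, those points lie in $g^{-1}\bracl{\{p\}}$, which has at most two elements; hence $S \cap \partial I^n = g^{-1}\bracl{\{p\}} = \{x_0,x_1\}$ exactly. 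But then $r\bracl{S} \subset g^{-1}\bracl{\{p\}} = \{x_0,x_1\}$, so $S$ is the disjoint union of the two nonempty closed sets $r^{-1}\bracl{\{x_0\}} \cap S$ and $r^{-1}\bracl{\{x_1\}} \cap S$, contradicting connectedness. This argument is short and complete; your normalised-direction approach, by contrast, does not seem to reach a contradiction without importing additional topological input of comparable strength to what you are trying to prove.
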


\begin{proof}
    It is commonly known that it is enough to prove that there is no retraction from $I^n$ into $\partial I^n$ since then the fixed point property follows easily. For that, let us suppose conversely that such a retraction $r \colon I^n \to \partial I^n$ exists i.e. $r$ is continuous and $\restr{r}{\partial I^n}$ is the identity function.

    Let $g \colon \partial I^n \to \mathbb{R}^{n-1}$ be an arbitrary continuous function\footnote{For instance, we can take $g = \pi \circ h$ where $h \colon \partial I^n \to S^{n-1}$ is a homeomorphism and $\pi \colon S^{n-1} \to \mathbb{R}^{n-1}$ is the projection defined as $\pi\brac{x_1, \ldots, x_n} = \brac{x_1, \ldots, x_{n-1}}$.} such that $\abs{g^{-1}\bracl{\left\{p\right\}}} \le 2$ for every \hbox{$p \in \mathbb{R}^{n-1}$} and we define $f = g \circ r \colon I^n \to \mathbb{R}^{n-1}$. Since $f$ is continuous, we can use Theorem \ref{contthm} so there exist a~point $p \in \mathbb{R}^{n-1}$ and a compact subset $S \subset f^{-1}\bracl{\left\{p\right\}}$ which connects some opposite faces of $I^n$. Thus, in particular $\abs{S \cap \partial I^n} \ge 2$. Since $r$ is a retraction we have $\restr{f}{\partial I^n} = g$ so $g\bracl{S \cap \partial I^n} = f\bracl{S \cap \partial I^n} \subset f\bracl{S} = \left\{p\right\}$ and then $S \cap \partial I^n \subset g^{-1}\bracl{\left\{p\right\}}$. But $2 \le \abs{S \cap \partial I^n}$ and $\abs{g^{-1}\bracl{\left\{p\right\}}} \le 2$ so $S \cap \partial I^n = g^{-1}\bracl{\left\{p\right\}} = \left\{x_0, x_1\right\}$ for some points $x_0 \neq x_1$. We have $g \circ r \bracl{S} = f\bracl{S} = \left\{p\right\}$ so $r\bracl{S} \subset g^{-1}\bracl{\left\{p\right\}} = \left\{x_0, x_1\right\} = S \cap \partial I^n = r\bracl{S \cap \partial I^n} \subset r\bracl{S}$. Thus $r\bracl{S} = \left\{x_0, x_1\right\}$ and then $S = \brac{r^{-1}\bracl{\left\{x_0\right\}} \cap S} \cup \brac{r^{-1}\bracl{\left\{x_1\right\}} \cap S}$. On the other hand, $x_0 \in r^{-1}\bracl{\left\{x_0\right\}} \cap S$ and $x_1 \in r^{-1}\bracl{\left\{x_1\right\}} \cap S$ so both sets $r^{-1}\bracl{\left\{x_0\right\}} \cap S$ and $r^{-1}\bracl{\left\{x_1\right\}} \cap S$ are nonempty, disjoint and closed in $S$. It contradicts the connectedness of $S$.
\end{proof}
\medskip

\textbf{Acknowledgements:} We would like to cordially thank Jarosław Grytczuk for pointing out the notion of clustered chromatic numbers and the community of Mathematics Stack Exchange for valuable suggestions regarding Proposition \ref{optimalconst1}. For the first author, this research was funded in whole or in part by {\it Narodowe Centrum Nauki}, grant 2021/42/E/ST1/00162.

\bibliography{chess_level_sets}{}

@article{chessboard,
  title={An $n$-dimensional version of {S}teinhaus' chessboard theorem},
  author={Tkacz, P. and Turza{\'n}ski, M.},
  journal={Topol. Its Appl.},
  volume={155},
  number={4},
  pages={354--361},
  year={2008},
  publisher={Elsevier}
}

@article{clusterednumber,
  title={Defective and {C}lustered {G}raph {C}olouring},
  author={Wood, D. R.},
  journal={Electron. J. Comb.},
  year={2012}
}

@book{kuratowski,
  title={Topology: Volume II},
  author={Kuratowski, K.},
  year={1968},
  publisher={Academic Press and PWN}
}

@article{Czarnecki,
author = {Czarnecki, A. and Kulczycki, M. and Lubawski, W.},
journal = {Ann. Polon. Math.},
number = {2},
pages = {189-191},
title = {On the connectedness of boundary and complement for domains},
volume = {103},
year = {2011}
}

@article{kulpa,
  title={The {P}oincar{\'e}-{M}iranda {T}heorem},
  author={Kulpa, W.},
  journal={Am. Math. Mon.},
  volume={104},
  number={6},
  pages={545--550},
  year={1997},
  publisher={Taylor \& Francis}
}

@book{lee,
  title={Introduction to {S}mooth {M}anifolds},
  author={Lee, J. M.},
  year={2003},
  publisher={Springer}
}

@book{rotman,
  title={An {I}ntroduction to {A}lgebraic {T}opology},
  author={Rotman, J. J.},
  volume={119},
  year={2013},
  publisher={Springer}
}

@book{burago,
  title={A {C}ourse in {M}etric {G}eometry},
  author={Burago, D. and Burago, Y. and Ivanov, S.},
  volume={33},
  year={2001},
  publisher={AMS}
}

@book{whyburn,
  title={Dynamic {T}opology},
  author={Whyburn, G. and Duda, E.},
  year={2012},
  publisher={Springer}
}

@article{kulpaparametric,
  title={Parametric extension of the {P}oincar{\'e} theorem},
  author={Kulpa, W. and Socha, L. and Turza{\'n}ski, M.},
  journal={Acta Univ. Carolin. Math. Phys.},
  volume={41},
  number={2},
  pages={39--46},
  year={2000},
  publisher={Charles University in Prague}
}

@article{miranda,
  title={Un osservazione su una teorema di {B}rouwer},
  author={Miranda, C.},
  journal={Boll. Unione Mat. Ital 3},
  pages={527},
  year={1940}
}

@book{kalejdoskop,
  title={Kalejdoskop matematyczny},
  author={Steinhaus, H.},
  year={1954},
  publisher={PWN}
}

@article{combi5,
title = {A combinatorial analog of a theorem of {F}. {J}. {D}yson},
author = {Jayawant, P. and Wong, P.},
journal = {Topol. Its Appl.},
volume = {157},
number = {10},
pages = {1833-1838},
year = {2010}
}

@article{combi1,
  title={Equilibrium theorem as the consequence of the {S}teinhaus chessboard theorem},
  author={Turzański, M.},
  journal={Topol. Proc.},
  volume={25},
  year={2000}
}

@article{combi3,
title = {Algorithms for finding connected separators between antipodal points},
journal = {Topol. Its Appl.},
volume = {154},
number = {18},
pages = {3156-3166},
year = {2007},
author = {Boroński, J. P. and Minc, P. and Turzański, M.}
}

@article{combi4,
  title={On approximation of asymmetric separators of the $n$-cube},
  author={Boro{\'n}ski, J. P. and Turza{\'n}ski, M.},
  journal={Fixed Point Theory Appl.},
  volume={2012},
  pages={1--7},
  year={2012},
  publisher={Springer}
}

@article{combi2,
author = {Kulpa, W. and Turzański, M.},
year = {2001},
month = {01},
pages = {},
title = {A combinatorial theorem for a symmetric triangulation of the sphere ${S}^2$},
volume = {42},
journal = {Acta Univ. Carolin. Math. Phys.}
}

@article{freund,
title = {A constructive proof of {T}ucker's combinatorial lemma},
journal = {J. Comb. Theory, Ser. A},
volume = {30},
number = {3},
pages = {321-325},
year = {1981},
issn = {0097-3165},
doi = {https://doi.org/10.1016/0097-3165(81)90027-3},
url = {https://www.sciencedirect.com/science/article/pii/0097316581900273},
author = {Freund, R. M. and Todd, M. J.}
}

@article{combi6,
  title={On a discrete version of the antipodal theorem},
  author={Oleszkiewicz, K.},
  journal={Fund. Math.},
  volume={151},
  number={2},
  pages={189--194},
  year={1996}
}

@article{kidawa,
title = {The cube-like complexes and the {P}oincaré–{M}iranda theorem},
journal = {Topol. Its Appl.},
volume = {196},
pages = {198-207},
year = {2015},
issn = {0166-8641},
author = {Kidawa, M. and Tkacz, P.}
}

@article{waksman,
  title={A theorem on level lines of continuous functions},
  author={Waksman, Z. and Wasilewsky, J.},
  journal={Isr. J. Math.},
  volume={27},
  pages={247--251},
  year={1977},
  publisher={Springer}
}

@book{matouvsek,
  title={Using the Borsuk-Ulam theorem: lectures on topological methods in combinatorics and geometry},
  author={Matou{\v{s}}ek, J. and Bj{\"o}rner, A. and Ziegler, G. M.},
  year={2003},
  publisher={Springer}
}
\bibliographystyle{unsrt}
\smallskip
	{\small Micha{\l} Dybowski}\\
	\small{Faculty of Mathematics and Information Science,}\\
	\small{Warsaw University of Technology,}\\
	\small{Pl. Politechniki 1, 00-661 Warsaw, Poland} \\
	{\tt michal.dybowski.dokt@pw.edu.pl}\\
	\\
	{\small Przemys{\l}aw  G\'orka}\\
	\small{Faculty of Mathematics and Information Science,}\\
	\small{Warsaw University of Technology,}\\
	\small{Pl. Politechniki 1, 00-661 Warsaw, Poland} \\
	{\tt przemyslaw.gorka@pw.edu.pl}\\
\end{document}